\newcommand{\N}{\mathbb{N}}
\newcommand{\R}{\mathbb{R}}
\renewcommand{\epsilon}{\varepsilon}
\renewcommand{\phi}{\varphi}
\newtheorem{defi}{Definition}[section]
\newtheorem{lemma}{Lemma}[section]
\newtheorem{thm}{Theorem}[section]
\newtheorem{prop}{Proposition}[section]
\newtheorem{rmk}{Remark}[section]
\newtheorem{ex}{Example}[section]
\numberwithin{equation}{section}
\DeclareMathOperator*{\esssup}{ess \, sup}
\begin{document}
\title{\textbf{On a singular Robin problem with convection terms}}
\author{
\bf Umberto Guarnotta, Salvatore A. Marano\thanks{Corresponding Author}\\
\small{Dipartimento di Matematica e Informatica, Universit\`a degli Studi di Catania,}\\
\small{Viale A. Doria 6, 95125 Catania, Italy}\\
\small{\it E-mail: umberto.guarnotta@gmail.com, marano@dmi.unict.it}\\
\mbox{}\\
\bf Dumitru Motreanu\\
\small{D\'{e}partement de Math\'{e}matiques, Universit\'{e} de Perpignan,}\\
\small{66860 Perpignan, France}\\
\small{\it E-mail: motreanu@univ-perp.fr}
}
\date{}
\maketitle
\begin{abstract}
In this paper, the existence of smooth positive solutions to a Robin boundary-value problem with non-homogeneous differential operator and reaction given by a nonlinear convection term plus a singular one is established. Proofs chiefly exploit sub-super-solution and truncation techniques, set-valued analysis, recursive methods, nonlinear regularity theory, as well as fixed point arguments. A uniqueness result is also presented. 
\end{abstract}
\vspace{2ex}
\noindent\textbf{Keywords:} Robin problem, quasilinear elliptic equation, gradient dependence, singular term.
\vspace{2ex}

\noindent\textbf{AMS Subject Classification:} 36J60, 35J62, 35J92.
\section{Introduction}

Let $\Omega\subseteq\R^N$ ($N\geq 3$) be a bounded domain with a $ C^2$-boundary $\partial \Omega$ and let $f:\Omega\times \R\times\R^N\to [0,+\infty)$, $g:\Omega\times (0,+\infty)\to [0,+\infty)$ be two  Carathéodory functions. In this paper, we study existence and uniqueness of solutions to the following Robin problem:
\begin{equation}\label{problem} \tag{${\rm P}$}
\left\{
\begin{array}{ll}
- {\rm \, div} a(\nabla u)=f(x,u,\nabla u) + g(x,u)\;\; &\mbox{in}\;\;\Omega, \\
u > 0\;\; &\mbox{in}\;\;\Omega, \\
\displaystyle{\frac{\partial u}{\partial \nu_a}}+\beta |u|^{p-2}u= 0\;\; &\mbox{on}\;\;\partial \Omega,
\end{array}
\right.
\end{equation}
where $a:\R^N\to\R^N$ denotes a continuous strictly monotone map having suitable properties, which basically stem from Liebermann's nonlinear regularity theory \cite{Li} and Pucci-Serrin's maximum principle \cite{PS}; see Section \ref{S2} for details. Moreover, $\beta>0$, $1<p<+\infty$, while $\frac{\partial}{\partial \nu_a}$ denotes the co-normal derivative associated with $a$.

This problem gathers together several hopefully interesting technical features, namely:
\begin{itemize}
\item The involved differential operator appears in a general form that includes non-homogeneous cases.
\item $f$ depends on the solution and its gradient. So, the reaction exhibits nonlinear convection terms.  
\item $g$ can be singular at zero, i.e., $\displaystyle{\lim_{s\to 0^+}}g(x,s)=+\infty$.
\item Robin boundary conditions are imposed instead of (much more frequent) Dirichlet ones.
\end{itemize}
All these things have been extensively investigated, although separately. For instance, both differential operator and Robin conditions already appear in \cite{GMP} where, however, the problem has a fully variational structure, whilst \cite{PW} falls inside non-variational settings. The paper \cite{FarMotPug} addresses the presence of convection terms; see also \cite{MMM,MW,ZLM}, which exhibit more general contexts. Last but not least, singular problems were considered especially after the seminal works of Crandall-Rabinowitz-Tartar \cite{CRT} and Lazer-McKenna \cite{LM}. Among recent contributions on this subject, we mention \cite{GP2,PapWink}. Finally, \cite{LMZ} treats a $p$-Laplacian Dirichlet problem whose right-hand side has the same form as that in \eqref{problem}. It represented the starting point of our research.

Several issues arise when passing from Dirichlet to Robin boundary conditions. Accordingly, here, we try to develop some useful tools in this direction, including the localization of solutions to an auxiliary variational problem inside an opportune sublevel of its energy functional, constructed for preserving some compactness and semicontinuity properties (cf. Section \ref{S3}). 

Our main result, Theorem \ref{existence}, establishes the existence of a regular solution to \eqref{problem} chiefly via sub-super-solution and truncation techniques, set-valued analysis, recursive methods, nonlinear regularity theory, as well as Schaefer's fixed point theorem. Uniqueness is also addressed, but only when $ p=2$ (vide Section \ref{S4}). 

Usually, linear problems possess only one solution, whereas multiplicity is encountered in nonlinear phenomena. Hence, it might be of interest to seek hypotheses on $f$ and $g$ that yield uniqueness even if $p \neq 2$. As far as we know, this is still an open problem.
\section{Preliminaries}\label{S2}
Let $X$ be a set and let $C\subseteq X$. We denote by $\chi_C$ the characteristic function of $C$. If $C\neq\emptyset$ and
$\Gamma:C\to C$ then
\[
{\rm Fix}(\Gamma):=\{ x\in C: x=\Gamma(x)\}
\]
is the fixed point set of $\Gamma$. The following result, usually called Schaefer's theorem \cite[p. 827]{GP} or Leray-Schauder's alternative principle, will play a basic role in the sequel.
\begin{thm}\label{schaefer}	
Let $X$ be a Banach space, let $C \subseteq X$ be nonempty convex, and let $\Gamma: C\to C$ be continuous. Suppose $\Gamma$ maps bounded sets into relatively compact sets. Then either $\{ x\in C:x = t\,\Gamma(x)\;\mbox{for some}\; t\in (0,1)\} $ turns out unbounded or ${\rm Fix}(\Gamma)\neq\emptyset$.
\end{thm}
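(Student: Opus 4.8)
The plan is to derive Theorem~\ref{schaefer} from Schauder's fixed point theorem by the classical radial‑retraction device. Suppose that
\[
S:=\{x\in C:\ x=t\,\Gamma(x)\ \text{for some}\ t\in(0,1)\}
\]
is bounded; one then has to produce a fixed point of $\Gamma$ in $C$. Fix $c_0\in C$ (possible since $C\neq\emptyset$) and, using the boundedness of $S$, pick $R>0$ with $\|x-c_0\|<R$ for every $x\in S$. Put $B:=\{x\in X:\|x-c_0\|\le R\}$ and $D:=C\cap B$; this is nonempty (it contains $c_0$), convex, bounded, and closed---the last property using closedness of $C$, which the statement tacitly assumes and which cannot be omitted.

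The core step is to introduce the retraction $r:X\to B$ onto $B$ along the rays issuing from $c_0$, namely $r(x):=x$ if $\|x-c_0\|\le R$ and $r(x):=c_0+R\,(x-c_0)/\|x-c_0\|$ otherwise; this map is continuous. Crucially, $r$ sends $C$ into $D$: when $\|x-c_0\|>R$ one has $r(x)=(1-\lambda)c_0+\lambda x$ with $\lambda:=R/\|x-c_0\|\in(0,1)$, a convex combination of $c_0,x\in C$, so $r(x)\in C\cap B=D$ by convexity of $C$. Hence $\Phi:=r\circ(\Gamma|_D)$ is a continuous self-map of $D$. Since $D$ is bounded, $\Gamma(D)$ is relatively compact by hypothesis, so $\Phi(D)$ is too; by Mazur's theorem its closed convex hull $K$ is compact, $K\subseteq D$ (as $D$ is closed, convex, and contains $\Phi(D)$), and $\Phi(K)\subseteq\Phi(D)\subseteq K$. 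Schauder's fixed point theorem applied to $\Phi|_K:K\to K$ now yields $x^\ast\in D$ with $x^\ast=r(\Gamma(x^\ast))$.

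It remains to run the dichotomy on $x^\ast$. If $\|\Gamma(x^\ast)-c_0\|\le R$, then $r$ acts as the identity at $\Gamma(x^\ast)$, whence $x^\ast=\Gamma(x^\ast)$ and ${\rm Fix}(\Gamma)\neq\emptyset$. Otherwise $\|\Gamma(x^\ast)-c_0\|>R$, so $x^\ast=c_0+t\,(\Gamma(x^\ast)-c_0)$ with $t:=R/\|\Gamma(x^\ast)-c_0\|\in(0,1)$ and, in particular, $\|x^\ast-c_0\|=R$; taking $c_0=0$ (one normalises by translation, or simply uses that $0\in C$, as happens in the applications) this reads $x^\ast=t\,\Gamma(x^\ast)$, i.e. $x^\ast\in S$, which contradicts $\|x^\ast\|=R>\sup_{x\in S}\|x\|$. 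Thus the second case is impossible and $\Gamma$ has a fixed point. The only substantial ingredient here is Schauder's theorem itself (ultimately Brouwer's theorem together with finite‑dimensional approximation); the rest is bookkeeping, the points deserving care being the verification that the ray‑retraction returns into $C$---precisely where convexity of $C$ enters---the passage, via Mazur's theorem and the compactness hypothesis on $\Gamma$, to an honestly compact convex set on which Schauder applies, and the reduction to the normalisation $0\in C$ with strict versus non‑strict inequalities kept consistent.
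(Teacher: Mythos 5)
The paper does not prove this statement at all: it is quoted as a known result (Schaefer's theorem, alias the Leray--Schauder alternative) with a pointer to \cite[p.~827]{GP}, and in the sole application $C$ is the whole space $C^1(\overline{\Omega})$. Your argument is the standard self-contained proof of that cited result --- truncate to $D=C\cap \overline{B}(c_0,R)$ with $R$ dominating the bounded set $S$, compose $\Gamma$ with the radial retraction onto the ball, pass via Mazur's theorem to a compact convex set on which Schauder's theorem applies, and then read off the alternative from whether the retraction acted trivially at $\Gamma(x^\ast)$ --- and it is correct. Your two caveats are not pedantry but genuinely necessary: without $0\in C$ the final identity $x^\ast=c_0+t(\Gamma(x^\ast)-c_0)$ does not reduce to $x^\ast=t\,\Gamma(x^\ast)$, and the statement as printed is in fact false in general (take $C=\{(x,1):x\in\R\}\subseteq\R^2$ and $\Gamma(x,1)=(x+1,1)$: then $S=\emptyset$ yet $\Gamma$ has no fixed point); likewise closedness of $C$ is needed so that $K=\overline{\mathrm{conv}}(\Phi(D))\subseteq D$. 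Both hypotheses hold trivially where the theorem is invoked in the paper, so your proof, read as a proof of the correctly stated classical result, fills in what the authors delegate to the literature. The only point worth stating slightly more explicitly is that $\overline{\Phi(D)}$ is compact because it is contained in the continuous image $r(\overline{\Gamma(D)})$ of a compact set; after that, Mazur and Schauder apply exactly as you say.
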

Given a partially ordered set $(X,\leq)$, we say that $X$ is downward directed when for every $x_1,x_2\in X$ there exists $x\in X$
such that $x\leq x_i$, $i=1,2$. The notion of upward directed set is analogous.

If $Y$ is a real function space on a set $\Omega\subseteq\R^N$ and $u,v\in Y$, then $u\leq v$ means $u(x)\leq v(x)$ for almost every $x\in\Omega$. Moreover, $Y_+ :=\{ u\in Y: u\geq 0\}$, $\Omega(u\leq v):=\{x\in\Omega: u(x)\leq v(x)\}$, etc. 

Let $X,Y$ be two metric spaces and let $\mathscr{S}:X\to 2^Y$. The multifunction $\mathscr{S}$ is called lower semicontinuous when for every $x_n\to x$ in $X$, $y\in\mathscr{S}(x)$ there exists a sequence $\{y_n\}\subseteq Y$ having the following properties: $y_n\to y$ in $Y$; $y_n\in\mathscr{S}(x_n)$ for all $n\in\N$. 

Finally, if $X$ is a Banach space and $J\in C^1(X)$, then
\[
{\rm Crit}(J):=\{ x \in X:J'(x) = 0\}
\]
is the critical set of $J$. 

The monograph \cite{CLM} represents a general reference on these topics.

Given any $s>1$, the symbol $s'$ will indicate the conjugate exponent of $s$, namely $s':=\frac{s}{s-1}$.

Henceforth, for $1<p<+\infty$, $\beta>0$, $\Omega$ as in the Introduction, and $u:\overline{\Omega} \to\R$ appropriate, the notation below will be adopted:
\[
\|u\|_\infty :=\esssup_{x \in \Omega} |u(x)|\, ;\quad \|u\|_{C^1(\overline{\Omega})}:=\|u\|_\infty + \|\nabla u\|_\infty\, ;
\]
\[
\|u\|_p:=\left( \int_{\Omega} |u|^p dx \right)^{\frac{1}{p}}\, ;\quad \|u\|_{p,\partial \Omega} := \left( \int_{\partial \Omega}
|u|^p d\sigma \right)^{\frac{1}{p}}\, ;
\]
\[
\|u\|_{1,p}:=\left( \|u\|_p^p+\|\nabla u\|_p^p\right)^\frac{1}{p}\, ;\quad \|u\|_{\beta,1,p}:=\left( \beta\|u\|_{p,\partial\Omega}^p+\|\nabla u\|_p^p \right)^\frac{1}{p}\, .
\]
Here, $\sigma$ denotes the $(N-1)$-dimensional Hausdorff measure on $\partial\Omega$. If $\nu (x)$ is the outward unit normal vector to $\partial\Omega$ at its point $x$ then $\frac{\partial}{\partial\nu_a}$ stands for the co-normal derivative associated with $a$, defined extending the map $u\mapsto\langle a(\nabla u),\nu\rangle$ from $C^1(\overline{\Omega})$ to $W^{1,p}(\Omega)$.
\begin{rmk}
 The trace inequality ensures that $\|u\|_{p,\partial \Omega}$ makes sense whenever $ u \in W^{1,p}(\Omega) $; see for instance \cite{E} or \cite{KJF}.
\end{rmk}
\begin{rmk}
It is known \cite{FMP} that
\[
{\rm int}(C^1(\overline{\Omega})_+)=\left\{u \in C^1(\overline{\Omega}): u(x) > 0\;\forall\, x\in\overline{\Omega} \right\}.
\]
\end{rmk}
\begin{rmk}
If $\beta>0$, then $\| \cdot \|_{\beta,1,p}$ is a norm on $W^{1,p}(\Omega)$ equivalent to $\|\cdot \|_{1,p}$. In particular, there exists $c_1=c_1(p,\beta,\Omega)\in (0,1) $ such that
\begin{equation}\label{equivnorm}
c_1\|u\|_{1,p}\leq\|u\|_{\beta,1,p}\leq\frac{1}{c_1}\|u\|_{1,p}\quad\forall\, u \in W^{1,p}(\Omega)\, .
\end{equation}
For the proof we refer to \cite{PW}.
\end{rmk}
Let $\omega\in C^1(0,+\infty)$ satisfy
\begin{equation*}
C_1 \leq\frac{t \omega'(t)}{\omega(t)} \leq C_2\, , \quad C_3 t^{p-1} \leq \omega(t) \leq C_4 (1+t^{p-1})
\end{equation*}
in $(0,+\infty)$, with $C_i$ suitable positive constants. We say that the operator $a:\R^N\to\R^N$ fulfills assumption 
$\underline{\rm{H(a)}}$ when:
\begin{itemize}
\item[$(a_1)$] $a(\xi) = a_0(|\xi|)\xi $ for all $\xi \in \R^N $, where $ a_0:(0,+\infty)\to(0,+\infty)$ is $C^1$, $t \mapsto ta_0(t)$ turns out strictly increasing, and
\begin{equation*}
\lim_{t \to 0^+} ta_0(t) = 0, \quad \lim_{t \to 0^+} \frac{ta_0'(t)}{a_0(t)} > -1.
\end{equation*}
\item[$(a_2)$] $ \displaystyle{|Da(\xi)| \leq C_5 \frac{\omega(|\xi|)}{|\xi|}} $ in $\R^N \setminus \{0\}$.
\item[$(a_3)$] $\displaystyle{\langle Da(\xi)y,y\rangle\geq\frac{\omega(|\xi|)}{|\xi|} |y|^2} $ for every $y,\xi\in\R^N$,
$\xi \neq 0$.
\end{itemize}
\begin{ex}
Various differential operators comply with ${\rm H(a)}$. Three classical examples are listed below.
\begin{itemize}
\item The so-called $p$-Laplacian: $\Delta_p u:={\rm div}\left( |\nabla u|^{p-2} \nabla u\right)$, which stems from $a_0(t):=t^{p-2}$.
\item The $(p,q)$-Laplacian: $\Delta_p u+\Delta_q u$, where $1< q< p< +\infty$. In this case, $a_0(t):=t^{p-2}+t^{q-2}$.
\item The generalized $p$-mean curvature operator:
\[
u \mapsto{\rm div}\left[ (1+|\nabla u|^2)^{\frac{p-2}{2}} \nabla u \right],
\]
corresponding to $a_0(t):=(1+t^2)^{\frac{p-2}{2}}$.
\end{itemize}
\end{ex}
Finally, define
\[
G_0(t):=\int_{0}^{t} s a_0(s){\rm d}s\;\;\forall\, t\in\R\quad\mbox{as well as}\quad G(\xi):=G_0(|\xi|)\;\;\forall\,\xi\in\R^N.
\]
\begin{prop}\label{opestimate}
Under hypothesis $ {\rm H(a)} $, there exists $ c_2 \in (0,1) $ such that
\[
|a(\xi)|\leq\frac{1}{c_2}(1+|\xi|^{p-1})\quad\mbox{and}\quad c_2 |\xi|^p\leq \langle a(\xi),\xi\rangle\leq\frac{1}{c_2}(1+|\xi|^p)
\]
for all $\xi\in\R^N$. In particular,
\[
c_2 |\xi|^p \leq G(\xi) \leq \frac{1}{c_2}(1+|\xi|^p)\,,\;\;\xi \in \R^N.
\]
\end{prop}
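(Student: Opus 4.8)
The strategy is to derive the first two chains of inequalities directly from the structural conditions in $\mathrm{H(a)}$ on $a_0$ and $\omega$, and then obtain the bounds on $G$ by integration. First I would establish the growth estimate $|a(\xi)| = |a_0(|\xi|)|\,|\xi| \leq \frac{1}{c_2}(1+|\xi|^{p-1})$. Writing $t = |\xi|$, we have $t\,a_0(t) = \int_0^t (s\,a_0(s))'\,ds$, and by $(a_1)$ the integrand is $a_0(s) + s\,a_0'(s) = a_0(s)\bigl(1 + \frac{s\,a_0'(s)}{a_0(s)}\bigr)$. Comparing $Da(\xi)$ with the one-dimensional derivative of $t \mapsto t\,a_0(t)$, or more directly using $(a_2)$ along the radial direction, one gets $|(t\,a_0(t))'| \leq C\,\omega(t)/t \cdot t = C\,\omega(t) \leq C\,C_4(1 + t^{p-1})$; integrating from $0$ to $t$ (legitimate since $t\,a_0(t) \to 0$ by $(a_1)$) yields $t\,a_0(t) \leq C(t + t^p) \leq C(1 + t^{p-1})$ for an appropriate constant, after splitting the cases $t \leq 1$ and $t > 1$. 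This gives the upper bound on $|a(\xi)|$.

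Next, for the lower bound on $\langle a(\xi), \xi\rangle = t^2 a_0(t)$, I would use $(a_3)$: testing $\langle Da(\xi)y, y\rangle \geq \frac{\omega(|\xi|)}{|\xi|}|y|^2$ with $y = \xi$ and recognizing that $\langle Da(\xi)\xi, \xi\rangle$ equals $t(t\,a_0(t))'$ (the radial second-order term), we obtain $t(t\,a_0(t))' \geq t\,\omega(t) \geq t\,C_3 t^{p-1} = C_3 t^p$; hence $(t\,a_0(t))' \geq C_3 t^{p-1}$, and integrating gives $t\,a_0(t) \geq \frac{C_3}{p} t^p$, i.e. $t^2 a_0(t) \geq \frac{C_3}{p} t^{p}$... wait, that gives $\langle a(\xi),\xi\rangle = t^2 a_0(t) \geq \frac{C_3}{p} t^{p+1}$, which is off by a power; the cleaner route is to note $t^2 a_0(t) = \int_0^t (s^2 a_0(s))'\,ds$ and estimate $(s^2 a_0(s))' = s\bigl(2 a_0(s) + s a_0'(s)\bigr) = s\,a_0(s)\bigl(1 + (1 + \frac{s a_0'(s)}{a_0(s)})\bigr) \geq s\,a_0(s)(1 + 1 + C_1)$, combined with $s\,a_0(s) \geq C_3 s^{p-1}$ (from $\omega(s)/s \leq$ something controlling $a_0$; more precisely $(a_3)$ with $y=\xi$ shows $a_0(t) \geq \omega(t)/t \geq C_3 t^{p-2}$ after a similar integration argument, or one uses the monotonicity of $t\mapsto t a_0(t)$ directly). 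Integrating then produces $\langle a(\xi),\xi\rangle \geq c\,|\xi|^p$, and the upper bound $\langle a(\xi),\xi\rangle \leq \frac{1}{c}(1+|\xi|^p)$ follows from the already-proven growth of $|a(\xi)|$ via Cauchy–Schwarz: $\langle a(\xi),\xi\rangle \leq |a(\xi)|\,|\xi| \leq \frac{1}{c_2}(1+|\xi|^{p-1})|\xi| \leq \frac{1}{c_2}(|\xi| + |\xi|^p) \leq \frac{2}{c_2}(1 + |\xi|^p)$.

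Finally, the estimate on $G(\xi) = G_0(|\xi|) = \int_0^{|\xi|} s\,a_0(s)\,ds$ follows by integrating the bounds on $s\,a_0(s)$: from $s\,a_0(s) \geq c\,s^{p-1}$ we get $G_0(t) \geq \frac{c}{p} t^p$, and from $s\,a_0(s) \leq \frac{1}{c}(1 + s^{p-1})$ we get $G_0(t) \leq \frac{1}{c}(t + \frac{t^p}{p})$, which is bounded by $\frac{1}{c_2}(1 + t^p)$ after enlarging the constant and splitting $t \leq 1$ versus $t > 1$. Collecting all constants and taking $c_2$ to be the minimum of the finitely many constants produced (intersected with $(0,1)$) completes the proof.

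The main obstacle I anticipate is bookkeeping rather than conceptual: one must carefully relate the matrix quantities $|Da(\xi)|$ and $\langle Da(\xi)\xi,\xi\rangle$ appearing in $(a_2)$–$(a_3)$ to the scalar derivative $(t\,a_0(t))'$, and then handle the transition between the ``small $|\xi|$'' regime (where the $t^{p-1}$ and $t^p$ terms are subordinate to the constant $1$) and the ``large $|\xi|$'' regime — the factor $1 + |\xi|^{p-1}$ versus $1 + |\xi|^p$ discrepancy in the two displayed lines is precisely what this case split resolves. Ensuring all constants can be absorbed into a single $c_2 \in (0,1)$ is routine once the case analysis is in place.
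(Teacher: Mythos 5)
The paper does not prove this proposition; it simply cites \cite[Lemmas 2.1--2.2]{GMP} and \cite[Lemma 2.2, Corollary 2.3]{PW}. Your plan --- integrate the radial derivative $(t\,a_0(t))'$ along rays and feed in the two-sided bounds on $\omega$ --- is exactly the strategy of those cited proofs, so the approach is right. The execution, however, contains errors in the powers of $t$ that break the argument. Since $Da(\xi)=a_0(t)I+\tfrac{a_0'(t)}{t}\,\xi\otimes\xi$ with $t=|\xi|$, one has $Da(\xi)\xi=(t\,a_0(t))'\,\xi$, hence $\langle Da(\xi)\xi,\xi\rangle=t^2\,(t\,a_0(t))'$ (not $t\,(t\,a_0(t))'$), and from $(a_2)$ the correct scalar bound is $0\le (t\,a_0(t))'\le C_5\,\omega(t)/t$, not $C_5\,\omega(t)$. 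Your inflated bound, once integrated, yields $t\,a_0(t)\le C(t+t^p)$, and the subsequent claim that this is $\le C(1+t^{p-1})$ ``after splitting cases'' is false for large $t$ ($t^p$ is not dominated by $1+t^{p-1}$); so the first displayed inequality, and with it the upper bounds on $\langle a(\xi),\xi\rangle$ and $G$, are not actually established. The same off-by-$t$ slip is what produces the $t^{p+1}$ you yourself flag in the lower bound, and the proposed repair rests on the unproved pointwise claim $a_0(t)\ge\omega(t)/t$.

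The correct bookkeeping is: from $(a_3)$ with $y=\xi$, $(t\,a_0(t))'\ge\omega(t)/t\ge C_3t^{p-2}$, which integrates to $t\,a_0(t)\ge\tfrac{C_3}{p-1}t^{p-1}$ and hence $\langle a(\xi),\xi\rangle=t\cdot t\,a_0(t)\ge\tfrac{C_3}{p-1}|\xi|^p$ directly (no detour through $(s^2a_0(s))'$ is needed). For the upper bound, $(t\,a_0(t))'\le C_5\,\omega(t)/t$ must be integrated with care near $0$: the crude estimate $\omega(s)/s\le C_4(1+s^{p-1})/s$ is not integrable at the origin, and you must invoke the condition $C_1\le s\omega'(s)/\omega(s)$, which gives $\omega(s)\le\omega(t)(s/t)^{C_1}$ for $s\le t$ and hence $\int_0^t\omega(s)/s\,{\rm d}s\le\omega(t)/C_1$; this yields $t\,a_0(t)\le\tfrac{C_4C_5}{C_1}(1+t^{p-1})$, which is the stated growth. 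Your proposal never uses the constant $C_1$, and that omission is precisely where the singularity at $\xi=0$ would defeat the integration. The remaining steps (Cauchy--Schwarz for the upper bound on $\langle a(\xi),\xi\rangle$, integration of $s\,a_0(s)$ for $G$, and shrinking to a single $c_2\in(0,1)$) are fine once the two-sided bound $c\,t^{p-1}\le t\,a_0(t)\le C(1+t^{p-1})$ is correctly in hand.
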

\begin{proof}
See \cite[Lemmas 2.1--2.2]{GMP} or \cite[Lemma 2.2 and Corollary 2.3]{PW}.
\end{proof}
\section{Existence}\label{S3}
Throughout this section, the convection term $f$ and the singularity $g$ will fulfill the assumptions below where, to avoid unnecessary technicalities, `for all $ x $' takes the place of `for almost all $ x $'. \\

\noindent $\underline{\rm{H(f)}}$
$f: \Omega \times \R \times\R^N \to [0,+\infty)$ is a Carathéodory function. Moreover, to every $M > 0$ there correspond  $c_M, d_M > 0$ such that
\[
f(x,s,\xi)\leq c_M + d_M |s|^{p-1}\quad \forall\, (x,s,\xi) \in\Omega\times\R\times\R^N\;\;\text{with}\;\; |\xi|\leq M.
\]
$ \underline{\rm{H(g)}}$
$g: \Omega \times (0,+\infty) \to [0,+\infty) $ is a Carathéodory function having the properties:
\begin{itemize}
\item[$\rm{(g_1)}$] $g(x,\cdot)$ turns out nonincreasing on $(0,1]$ whatever $x \in\Omega$, and $ g(\cdot,1)\not\equiv 0$.
\item[$\rm{(g_2)}$] There exist $c,d>0$ such that
\[
g(x,s)\leq c+d s^{p-1} \quad \forall\, (x,s) \in\Omega \times (1,+\infty).
\]
\item[$ \rm{(g_3)}$] With appropriate $\theta\in\rm{int}(C^1(\overline{\Omega})_+)$ and $\epsilon_0 > 0$, the map $x\mapsto g(x,\epsilon \theta(x)) $ belongs to $ L^{p'}(\Omega) $ for any $\epsilon \in (0,\epsilon_0)$. 
\end{itemize}
The paper \cite{LMZ} contains meaningful examples of functions $g$ that satisfy H(g).

Fix $w\in C^1(\overline{\Omega})$. We first focus on the singular problem (without convection terms)
\begin{equation}\label{auxprob} \tag{${\rm P}_w$} 
\left\{
\begin{array}{ll}
- {\rm div} \, a(\nabla u) = f(x,u,\nabla w) + g(x,u)\;\; &\mbox{in}\;\;\Omega, \\
u > 0 \;\; &\mbox{in}\;\; \Omega, \\
\displaystyle{\frac{\partial u}{\partial \nu_a}} + \beta |u|^{p-2}u= 0 \;\; &\mbox{on}\;\; \partial \Omega.
\end{array}
\right.
\end{equation}
\begin{defi}
$u\in W^{1,p}(\Omega)$ is called a subsolution to \eqref{auxprob} when
\begin{equation}\label{subsol}
\int_{\Omega} \langle a(\nabla u),\nabla v \rangle{\rm d}x
+ \beta\int_{\partial \Omega} |u|^{p-2}uv{\rm d}\sigma
\leq \int_{\Omega}[f(\cdot,u,\nabla w)+g(\cdot,u)]v{\rm d}x
\end{equation}
for all $v\in W^{1,p}(\Omega)_+ $. The set of subsolutions will be denoted by $ \underline{U}_w $. \\
We say that $u\in W^{1,p}(\Omega)$ is a supersolution to \eqref{auxprob} if
\begin{equation}\label{super}
\int_{\Omega} \langle a(\nabla u),\nabla v \rangle {\rm d}x + \beta\int_{\partial \Omega} |u|^{p-2}uv {\rm d}\sigma
\geq \int_{\Omega} [f(\cdot,u,\nabla w)+g(\cdot,u)]v {\rm d}x
\end{equation}
for every $v \in W^{1,p}(\Omega)_+ $, and indicate with $ \overline{U}_w $ the supersolution set. \\
Finally, $u\in W^{1,p}(\Omega)$ is called a solution of \eqref{auxprob} provided
\[
\int_{\Omega} \langle a(\nabla u),\nabla v \rangle {\rm d}x + \beta\int_{\partial \Omega} |u|^{p-2}uv {\rm d}\sigma = \int_{\Omega}[f(\cdot,u,\nabla w)+g(\cdot,u)]v {\rm d}x
\]
for all $ v \in W^{1,p}(\Omega)_+ $. The corresponding solution set will be denoted by $ U_w $. Obviously, $U_w =\overline{U}_w\cap \underline{U}_w $.
\end{defi}
\begin{lemma}\label{supersol}
If $ u_1,u_2 \in\overline{U}_w $ (resp. $u_1,u_2\in\underline{U}_w$), then $\min\{u_1,u_2\}\in\overline{U}_w$ (resp. $\max\{ u_1,u_2\} \in\underline{U}_w$). In particular, the set $\overline{U}_w $ (resp. $ \underline{U}_w $) is downward (resp. upward) directed.
\end{lemma}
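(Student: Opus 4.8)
The standard approach is to test the (super)solution inequalities with a cleverly chosen nonnegative test function supported on the set where $u_2\le u_1$ (for supersolutions) and exploit the strict monotonicity of $a$. Let me sketch the supersolution case; the subsolution case is symmetric. Set $u=\min\{u_1,u_2\}$, write $\Omega_1=\Omega(u_1\le u_2)$ and $\Omega_2=\Omega\setminus\Omega_1$, so that $u=u_1$ on $\Omega_1$ and $u=u_2$ on $\Omega_2$. Fix $v\in W^{1,p}(\Omega)_+$ and introduce the truncation $\phi=(u_1-u_2)^+\wedge ?$—more precisely the clean way is to test \eqref{super} for $u_1$ with $v_1:=\min\{v,(u_2-u_1)^+\}\cdot$(appropriate normalization) and \eqref{super} for $u_2$ with $v_2:=v-v_1$; both lie in $W^{1,p}(\Omega)_+$ by the lattice structure of $W^{1,p}(\Omega)$. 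Then add the two inequalities. On the boundary term, $|u_i|^{p-2}u_i$ on the set where the relevant truncation is active matches $|u|^{p-2}u$, and the same matching happens for the reaction term since $g(\cdot,u_i)=g(\cdot,u)$ there; the diffusion terms combine to $\int_\Omega\langle a(\nabla u),\nabla v\rangle$ up to an error integral of the form $\int_{\{u_1=u_2\}}(\cdots)$ which vanishes because $\nabla u_1=\nabla u_2$ a.e. on $\{u_1=u_2\}$.

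In more detail, the key algebraic identity to verify is that, with this choice of $v_1,v_2$,
\[
\int_\Omega\langle a(\nabla u_1),\nabla v_1\rangle\,{\rm d}x+\int_\Omega\langle a(\nabla u_2),\nabla v_2\rangle\,{\rm d}x=\int_\Omega\langle a(\nabla u),\nabla v\rangle\,{\rm d}x,
\]
and similarly for the boundary and reaction integrals. The gradient of $v_1$ is $\nabla v$ on the (measurable) set where $v<(u_2-u_1)^+$ and $u_1<u_2$, equals $\nabla u_2-\nabla u_1$ on the set where $v>(u_2-u_1)^+>0$, and is zero elsewhere; a bookkeeping of these three regions against the decomposition $\nabla u=\chi_{\Omega_1}\nabla u_1+\chi_{\Omega_2}\nabla u_2$ gives the claimed identity once one uses that $\nabla(u_1-u_2)=0$ a.e. on $\{u_1=u_2\}$. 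Summing the two supersolution inequalities and invoking this identity yields exactly \eqref{super} for $u$ against the arbitrary $v\in W^{1,p}(\Omega)_+$, i.e. $u\in\overline U_w$.

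The main obstacle is purely bookkeeping: one must make the truncated test functions and the splitting of $\Omega$ precise enough that every cross term is accounted for, in particular handling the overlap set $\{u_1=u_2\}$ (where gradients agree a.e.) and making sure $v_1,v_2\in W^{1,p}(\Omega)_+$ with the right gradients. No analytic subtlety beyond the chain rule for Sobolev truncations and Stampacchia's theorem ($\nabla w=0$ a.e. on level sets of $w$) is needed; $\rm H(a)$ is used only implicitly through Proposition \ref{opestimate} to guarantee all integrals are finite. The ``in particular'' clause is then immediate: given $u_1,u_2\in\overline U_w$, the element $\min\{u_1,u_2\}\in\overline U_w$ lies below both, so $\overline U_w$ is downward directed; dually for $\underline U_w$.
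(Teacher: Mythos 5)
Your overall strategy (test each supersolution inequality with a nonnegative truncated function built from $(u_2-u_1)^+$, add, and use monotonicity of $a$) is the right one and is of the same family as the paper's argument, which uses $v_1=\eta_\epsilon(u_2-u_1)\,\hat v$ and $v_2=(1-\eta_\epsilon(u_2-u_1))\,\hat v$ with $\eta_\epsilon$ a piecewise-linear approximation of the Heaviside function. However, as written your detailed bookkeeping contains a genuine error. With $v_1=\min\{v,(u_2-u_1)^+\}$ and $v_2=v-v_1$, the ``key algebraic identity''
\[
\int_\Omega\langle a(\nabla u_1),\nabla v_1\rangle\,{\rm d}x+\int_\Omega\langle a(\nabla u_2),\nabla v_2\rangle\,{\rm d}x=\int_\Omega\langle a(\nabla u),\nabla v\rangle\,{\rm d}x
\]
is false, and the discrepancy is \emph{not} an integral over $\Omega(u_1=u_2)$. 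On the region $B:=\Omega\bigl(0<u_2-u_1\le v\bigr)$ (where the truncation is active) one has $\nabla v_1=\nabla(u_2-u_1)$ and $\nabla v_2=\nabla v-\nabla(u_2-u_1)$, while $u=u_1$ there; the pointwise difference between the left- and right-hand sides is
\[
\langle a(\nabla u_1)-a(\nabla u_2),\nabla(u_2-u_1)\rangle+\langle a(\nabla u_2)-a(\nabla u_1),\nabla v\rangle .
\]
The first summand is nonpositive by monotonicity of $a$ (and can be discarded, since it sits on the favourable side of the inequality), but the second summand has no sign and $B$ has positive measure in general. Stampacchia's theorem does not help here because nothing forces $\nabla u_1=\nabla u_2$ on $B$. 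So the argument, taken literally, does not close.

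The missing ingredient is precisely the limiting step you alluded to with ``appropriate normalization'' but never carried out: replace $v_1$ by $v_1^\epsilon:=\min\{v,\epsilon^{-1}(u_2-u_1)^+\}$, so that the bad region becomes $B_\epsilon=\Omega(0<u_2-u_1\le\epsilon v)$, whose characteristic function tends to $0$ a.e.\ on $\Omega(u_1<u_2)$ as $\epsilon\to0^+$. Then the signed term $\epsilon^{-1}\langle a(\nabla u_1)-a(\nabla u_2),\nabla(u_2-u_1)\rangle$ on $B_\epsilon$ is dropped by monotonicity, the unsigned remainder $\int_{B_\epsilon}\langle a(\nabla u_2)-a(\nabla u_1),\nabla v\rangle\,{\rm d}x$ vanishes by absolute continuity of the integral, and the reaction and boundary terms converge by dominated convergence since $v_1^\epsilon\to v\chi_{\Omega(u_1<u_2)}$ pointwise. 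With these corrections your route becomes a legitimate variant of the paper's proof (it is essentially the Carl--Le--Motreanu device with the cutoff placed inside the $\min$ rather than as a multiplicative factor $\eta_\epsilon(u_2-u_1)$), but the claim of an exact identity with error confined to the level set $\Omega(u_1=u_2)$ must be abandoned: one only obtains an inequality in the correct direction, and only in the limit $\epsilon\to0^+$.
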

\begin{proof} This proof is patterned after that of \cite[Lemma 10]{LMZ} (see also \cite{CLM}). Thus, we only sketch it. Pick  $u_1,u_2\in\overline{U}_w$, set $u:=\min\{u_1,u_2\} $, and define, for every $t\in\R$,
\[
\eta_\epsilon(t):= \left\{
\begin{array}{ll}
0 \quad &\mbox{when} \quad t < 0, \\
\frac{t}{\epsilon}\quad &\mbox{if} \quad 0 \leq t \leq \epsilon, \\
1 \quad &\mbox{for} \quad t > \epsilon,
\end{array}
\right.
\]
where $\epsilon>0$. Further, to shorten notation, write $\bar\eta_\epsilon(x):=\eta_\epsilon(u_2(x)-u_1(x))$. Evidently, both
$\bar\eta_\epsilon\in W^{1,p}(\Omega)_+$ and
$$\nabla\bar\eta_\epsilon=\eta'_\epsilon(u_2-u_1)\,\nabla(u_2-u_1).$$
Let $\hat v\in C^1(\overline{\Omega})_+$. Since $u_i$ fulfills \eqref{super}, one has
\begin{equation*}
\int_{\Omega}\langle a(\nabla u_i),\nabla v\rangle{\rm d}x + \beta \int_{\partial \Omega} |u_i|^{p-2}u_i v {\rm d}\sigma
\geq\int_{\Omega} [f(\cdot,u_i,\nabla w)+g(\cdot,u_i)]v {\rm d}x
\end{equation*}
whatever $v\in W^{1,p}(\Omega)_+ $. Choosing $v:=\bar\eta_\epsilon\,\hat v$ when $ i = 1 $, $v:= (1-\bar\eta_\epsilon)\hat v$ if $ i = 2 $, and adding term by term produces
\begin{equation}\label{termbyterm}
\begin{split}
&\int_{\Omega} \langle a(\nabla u_1) - a(\nabla u_2),\nabla (u_2-u_1) \rangle \eta_\epsilon'(u_2-u_1)\hat v {\rm d}x \\
&+ \int_{\Omega}\langle a(\nabla u_1), \nabla \hat v \rangle\,\bar\eta_\epsilon{\rm d}x
+ \int_{\Omega} \langle a(\nabla u_2),\nabla \hat v \rangle (1-\bar\eta_\epsilon) {\rm d}x \\
&+\beta\left( \int_{\partial \Omega} |u_1|^{p-2}u_1\bar\eta_\epsilon\hat v {\rm d}\sigma
+ \int_{\partial \Omega} |u_2|^{p-2}u_2 (1-\bar\eta_\epsilon)\hat v {\rm d}\sigma \right) \\
&\geq \int_{\Omega} [f(\cdot,u_1,\nabla w)+g(\cdot,u_1)]\bar\eta_\epsilon\hat v {\rm d}x\\
&+ \int_{\Omega} [f(\cdot,u_2,\nabla w)+g(\cdot,u_2)](1-\bar\eta_\epsilon)\hat v {\rm d}x.
\end{split}
\end{equation}
The strict monotonicity of $a$, combined with $\eta_\epsilon'(u_2-u_1)\hat v\geq 0$, lead to
\[
\int_{\Omega}\langle a(\nabla u_1)-a(\nabla u_2),\nabla (u_2-u_1)\rangle\eta_\epsilon'(u_2-u_1)\hat v{\rm d}x \leq 0.
\]
For almost every $x\in\Omega$ we have
\[
\nabla u(x) = \left\{
\begin{array}{ll}
\nabla u_1(x)\;\; &\mbox{if}\; u_1(x)<u_2(x), \\
\nabla u_2(x)\;\; &\mbox{otherwise,}
\end{array}
\right.
\]
as well as
\[
\lim_{\epsilon \to 0^+}\bar\eta_\epsilon(x)= \chi_{\Omega(u_1<u_2)}(x).
\]
Hence, letting $\epsilon\to 0^+$ and using the dominated convergence theorem, inequality \eqref{termbyterm} becomes
\begin{equation*}
\int_{\Omega}\langle a(\nabla u),\nabla\hat v\rangle {\rm d}x+\beta\int_{\partial \Omega}|u|^{p-2}u\hat v {\rm d}\sigma
\geq\int_{\Omega}[f(\cdot,u,\nabla w)+g(\cdot,u)]\hat v {\rm d}x;
\end{equation*}
see \cite[Lemma 10]{LMZ} for more details. Since $\hat v\in C^1(\overline{\Omega})_+$ was arbitrary, by density one arrives at
$u\in\overline{U}_w$.
\end{proof}
\begin{lemma}\label{subsollemma} 
Let ${\rm H(f)}$ and ${\rm H(g)}$ be satisfied. Then
there exists a subsolution $\underline{u}\in{\rm int}(C^1(\overline{\Omega})_+)$ to \eqref{auxprob} independent of $w$ and such that $\|\underline{u}\|_\infty \leq 1$.
\end{lemma}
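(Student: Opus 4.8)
The natural strategy is to obtain $\underline{u}$ as the (unique) solution of an auxiliary Robin problem whose right-hand side is a small positive constant, scaled so that the solution stays below $1$ and so that the singular term $g(\cdot,u)$ dominates that constant on the relevant range. Concretely, I would fix a small $\delta>0$ and consider the problem
\begin{equation*}
-\,{\rm div}\,a(\nabla u)=\delta\quad\text{in }\Omega,\qquad \frac{\partial u}{\partial\nu_a}+\beta|u|^{p-2}u=0\quad\text{on }\partial\Omega.
\end{equation*}
By the direct method applied to the coercive, strictly convex energy $u\mapsto\int_\Omega G(\nabla u)\,{\rm d}x+\frac{\beta}{p}\|u\|_{p,\partial\Omega}^p-\delta\int_\Omega u\,{\rm d}x$ on $W^{1,p}(\Omega)$ (coercivity comes from Proposition \ref{opestimate} together with the equivalence of norms \eqref{equivnorm}), this problem has a unique weak solution $u_\delta$. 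Nonlinear regularity theory (Lieberman \cite{Li}) gives $u_\delta\in C^1(\overline\Omega)$, and the strong maximum principle of Pucci--Serrin \cite{PS}, applicable thanks to $(a_1)$, forces $u_\delta>0$ on $\overline\Omega$, i.e. $u_\delta\in{\rm int}(C^1(\overline\Omega)_+)$. As $\delta\to 0^+$ one has $\|u_\delta\|_\infty\to 0$ (a quantitative a priori bound from the weak formulation with test function $u_\delta$, combined with the regularity estimate), so for $\delta$ small enough $\|u_\delta\|_\infty\le 1$.

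The point of choosing the constant $\delta$ this small is that on $(0,1]$ the singular term is bounded below away from zero in a usable sense: by $(g_1)$, $g(x,s)\ge g(x,1)$ for $s\in(0,1]$, and $g(\cdot,1)\ge 0$ with $g(\cdot,1)\not\equiv 0$. Since $f\ge 0$ by $\rm H(f)$, to verify the subsolution inequality \eqref{subsol} for $\underline{u}:=u_\delta$ it suffices to check
\begin{equation*}
\int_\Omega\langle a(\nabla u_\delta),\nabla v\rangle\,{\rm d}x+\beta\int_{\partial\Omega}|u_\delta|^{p-2}u_\delta v\,{\rm d}\sigma=\delta\int_\Omega v\,{\rm d}x\le\int_\Omega g(\cdot,u_\delta)v\,{\rm d}x\quad\text{for all }v\in W^{1,p}(\Omega)_+,
\end{equation*}
and for this I would need $\delta\le g(x,u_\delta(x))$ a.e. Here a small subtlety appears: $g(\cdot,1)$ need only be nonnegative and not identically zero, so it may vanish on part of $\Omega$, and a pointwise lower bound $g(\cdot,1)\ge\delta$ need not hold. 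The fix is to exploit hypothesis $(g_3)$ instead: taking $\theta\in{\rm int}(C^1(\overline\Omega)_+)$ and $\epsilon\in(0,\epsilon_0)$ from $(g_3)$, one compares $u_\delta$ with $\epsilon\theta$. Since $\epsilon\theta\in{\rm int}(C^1(\overline\Omega)_+)$ has a positive minimum and we may shrink $\epsilon$ and $\delta$ so that $u_\delta\le\epsilon\theta\le 1$, monotonicity $(g_1)$ yields $g(\cdot,u_\delta)\ge g(\cdot,\epsilon\theta)$; this is an $L^{p'}$ function, hence finite a.e. and positive wherever it is. To actually dominate the constant $\delta$ I would instead solve the auxiliary problem with right-hand side $\min\{g(\cdot,\epsilon\theta),1\}\cdot\chi_E$ for a suitable set $E$ of positive measure on which $g(\cdot,\epsilon\theta)$ is bounded below, or more cleanly: take right-hand side $h:=\min\{1,g(\cdot,\epsilon\theta)\}\in L^\infty(\Omega)$, solve $-{\rm div}\,a(\nabla u)=\lambda h$ with $\lambda>0$ small, and conclude as above that the solution $u_{\lambda}$ satisfies $u_\lambda\le\epsilon\theta$ and hence $\lambda h\le g(\cdot,\epsilon\theta)\le g(\cdot,u_\lambda)$ provided $\lambda\le 1$. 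Then $\underline{u}:=u_\lambda$ works, and it is manifestly independent of $w$ since neither the operator nor $h$ involves $w$ and $f\ge 0$ was discarded.

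The main obstacle is therefore not the existence/regularity machinery (which is standard given Proposition \ref{opestimate}, \cite{Li}, \cite{PS}) but the bookkeeping needed to guarantee simultaneously that (i) $\|\underline u\|_\infty\le 1$, (ii) $\underline u\le\epsilon\theta$ so that the monotonicity of $g$ can be invoked, and (iii) the right-hand side used in the auxiliary problem is pointwise $\le g(\cdot,\underline u)$. All three are achieved by choosing the scaling parameter $\lambda$ small enough, using the linear (in $\lambda$) dependence of the a priori $C^1$-bound on $\lambda h$ together with the comparison principle for the strictly monotone operator $a$ — which gives $u_{\lambda_1}\le u_{\lambda_2}$ when $\lambda_1\le\lambda_2$ and, by comparing with the constant-zero boundary data and a barrier built from $\theta$, the sandwich $0<u_\lambda\le\epsilon\theta$. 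I would make (iii) precise by noting that $\epsilon\theta$ is itself a supersolution of the auxiliary linear-data problem for $\lambda$ small (since ${\rm div}\,a(\nabla(\epsilon\theta))$ is bounded while $\lambda h\to 0$), so the comparison principle applies directly. Wrapping up, $\underline u\in{\rm int}(C^1(\overline\Omega)_+)$, $\|\underline u\|_\infty\le1$, and \eqref{subsol} holds for every $w\in C^1(\overline\Omega)$, which is the assertion.
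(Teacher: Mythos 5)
Your overall architecture parallels the paper's: solve an auxiliary Robin problem with a bounded nonnegative right-hand side by the direct method, upgrade to ${\rm int}(C^1(\overline{\Omega})_+)$ via Lieberman regularity and the Pucci--Serrin maximum principle, shrink a parameter to force $\|\underline{u}\|_\infty\le 1$, and bring in $\epsilon\theta$ from $({\rm g}_3)$ together with the monotonicity $({\rm g}_1)$. The difference lies in the choice of right-hand side. You use a fixed function $\lambda h(x)$ with $h=\min\{1,g(\cdot,\epsilon\theta)\}$, which then obliges you to prove the pointwise inequality $\lambda h\le g(\cdot,u_\lambda)$, i.e.\ to first establish $u_\lambda\le\epsilon\theta$. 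The paper instead takes the state-dependent truncation $\tilde g(x,s):=\min\{g(x,s),\delta\}$ as the reaction; the subsolution inequality is then immediate from $\tilde g(\cdot,\underline{u})\le g(\cdot,\underline{u})$ pointwise, and the comparison with $\epsilon\theta$ (in the opposite direction from yours, namely $\underline{u}\ge\epsilon\theta$, obtained merely because $\underline{u}$ has a positive minimum) is used only to secure the $L^{p'}$ bound $g(\cdot,\underline{u})\le g(\cdot,\epsilon\theta)$ needed in later lemmas. In short, the paper's truncation buys for free exactly the step you have to work for.

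Your route can be made to work, but the justification you propose to ``make (iii) precise'' --- that $\epsilon\theta$ is a supersolution of the auxiliary problem for small $\lambda$ because ${\rm div}\,a(\nabla(\epsilon\theta))$ is bounded while $\lambda h\to 0$ --- does not stand. The function $\theta$ supplied by $({\rm g}_3)$ is only of class $C^1$, so ${\rm div}\,a(\nabla(\epsilon\theta))$ need not exist as a function; even if it did, boundedness carries no sign information, so $-{\rm div}\,a(\nabla(\epsilon\theta))\ge\lambda h\ge 0$ is not guaranteed, and the Robin boundary inequality required of a supersolution is not checked either. Fortunately the other mechanism you mention suffices and is the one to keep: testing the auxiliary problem with $u_\lambda$ and using Proposition \ref{opestimate} with \eqref{equivnorm} gives $\|u_\lambda\|_{1,p}\to 0$ as $\lambda\to 0^+$, hence (Moser iteration plus Lieberman) $\|u_\lambda\|_\infty\to 0$; since $\epsilon\theta\in{\rm int}(C^1(\overline{\Omega})_+)$ has a strictly positive minimum, $u_\lambda\le\epsilon\theta\le 1$ for $\lambda$ small, and $({\rm g}_1)$ then yields $\lambda h\le g(\cdot,\epsilon\theta)\le g(\cdot,u_\lambda)$. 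With that replacement (and the observation that $h\not\equiv 0$, since $g(\cdot,\epsilon\theta)\ge g(\cdot,1)\not\equiv 0$, so the strong maximum principle indeed applies to $u_\lambda$), your argument is complete.
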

\begin{proof}
Given any $\delta>0$, consider the problem
\begin{equation}\label{subprob}
\left\{
\begin{array}{ll}
- {\rm div} \, a(\nabla u) = \tilde{g}(x,u) \;\; &\mbox{in}\;\;\Omega, \\
\displaystyle{\frac{\partial u}{\partial \nu_a}} + \beta |u|^{p-2}u= 0\;\; &\mbox{on}\;\;\partial \Omega,
\end{array}
\right.
\end{equation}
where $\tilde{g}(x,s):=\min\{g(x,s),\delta\}$, $(x,s)\in\Omega\times (0,+\infty)$. Standard arguments yield a nontrivial solution $ \underline{u} \in W^{1,p}(\Omega) $ to \eqref{subprob}, because $\tilde{g}$ is bounded. Testing with $-\underline{u}^- $ we get
\[
-\int_{\Omega}\langle a(\nabla \underline{u}),\nabla\underline{u}^- \rangle {\rm d}x
-\beta \int_{\Omega}|\underline{u}|^{p-2} \underline{u} \underline{u}^- {\rm d}\sigma =
-\int_{\Omega} \tilde{g}(x,\underline{u}) \underline{u}^- {\rm d}x \leq 0,
\]
whence, by Proposition \ref{opestimate},
\[
c_2\|\underline{u}^-\|_{\beta,1,p}^p
\leq\int_{\Omega}\langle a(\nabla \underline{u}^-), \nabla \underline{u}^- \rangle {\rm d}x
+ \beta\int_{\Omega} (\underline{u}^-)^{p} {\rm d}\sigma \leq 0.
\]
Therefore, $\underline{u}\geq 0$. Regularity up to the boundary \cite{Li} and strong maximum principle \cite{PS} then force
$\underline{u}\in{\rm int}(C^1(\overline{\Omega})_+)$. Using the maximum principle one next has
\begin{equation}\label{subsupnorm}
\|\underline{u}\|_\infty \leq 1
\end{equation}
once $\delta$ is small enough. Let $\theta$ and $\epsilon_0$ be as in ${\rm (g_3)}$. Since $\underline{u},\theta\in{\rm int}(C^1(\overline{\Omega})_+) $, there exists $\epsilon\in (0,\epsilon_0)$ such that $\underline{u} - \epsilon \theta \in {\rm
int}(C^1(\overline{\Omega})_+)$. Via ${\rm (g_1)}$, \eqref{subsupnorm}, and ${\rm (g_3)} $, we thus infer
\begin{equation}\label{subsolsummability}
0 \leq g(\cdot,\underline{u}) \leq g(\cdot,\epsilon\theta)\in L^{p'}(\Omega).
\end{equation}
The conclusion is achieved by verifying that $\underline{u}\in\underline{U}_w$ for any $w\in C^1(\overline{\Omega})$. Pick such a $w$, test \eqref{subprob} with $v \in W^{1,p}(\Omega)_+ $, and recall the definition of $ \tilde{g} $, to arrive at
\begin{equation*}
\begin{split}
&\int_{\Omega} \langle a(\nabla \underline{u}),\nabla v\rangle{\rm d}x+\beta\int_{\partial \Omega}\underline{u}^{p-1}v {\rm d}\sigma
=\int_{\Omega}\tilde{g}(\cdot,\underline{u})v {\rm d}x \\
&\leq\int_{\Omega} g(\cdot,\underline{u})v{\rm d}x\leq\int_{\Omega} [f(\cdot,u,\nabla w) + g(\cdot,\underline{u})] v {\rm d}x,
\end{split}
\end{equation*}
as desired.
\end{proof}
\begin{rmk}
This proof shows that the subsolution $\underline{u}$ constructed in Lemma \ref{subsollemma} enjoys the further property:
\begin{equation}\label{subformula}
\int_{\Omega} \langle a(\nabla\underline{u}),\nabla v\rangle{\rm d}x
+\beta \int_{\partial \Omega}|\underline{u}|^{p-2}\underline{u}v {\rm d}\sigma
\leq\int_{\Omega} g(\cdot,\underline{u})v{\rm d}x\;\; \forall\, v \in W^{1,p}(\Omega)_+.
\end{equation}
\end{rmk}
\noindent Given $w\in C^1(\overline{\Omega})$, consider the truncated problem
\begin{equation}\label{truncprob} 
\left\{
\begin{array}{ll}
- {\rm div} \, a(\nabla u) =\hat{f}(x,u)+\hat{g}(x,u)\;\; &\mbox{in}\;\;\Omega, \\
u > 0\;\; &\mbox{in}\;\; \Omega, \\
\displaystyle{\frac{\partial u}{\partial \nu_a}} + \beta |u|^{p-2}u= 0\;\;&\mbox{on}\;\;\partial \Omega,
\end{array}
\right.
\end{equation}
where
\begin{equation}\label{1f}
\hat{f}(x,s):= \left\{
\begin{array}{ll}
f(x,\underline{u}(x),\nabla w(x))\;\; &\mbox{if}\;\;s \leq \underline{u}(x), \\
f(x,s,\nabla w(x))\;\; &\mbox{otherwise,}
\end{array}
\right.
\end{equation}
\begin{equation}\label{1}
\hat{g}(x,s):= \left\{
\begin{array}{ll}
g(x,\underline{u}(x))\;\; &\mbox{if}\;\; s \leq \underline{u}(x), \\
g(x,s)\;\; &\mbox{otherwise.}
\end{array}
\right.
\end{equation}
The energy functional corresponding to \eqref{truncprob} writes
\begin{equation*}
\begin{split}
\mathscr{E}_w(u):=\frac{1}{p} \int_{\Omega} G(\nabla u){\rm d}x+\frac{\beta}{p} \int_{\partial \Omega} |u|^p {\rm d}\sigma 
- \int_{\Omega}\hat{F}(\cdot,u){\rm d}x-\int_{\Omega} \hat{G}(\cdot,u){\rm d}x
\end{split}
\end{equation*}
for all $ u \in W^{1,p}(\Omega) $, with
\[
\hat{F}(x,s):=\int_{0}^{s} \hat{f}(x,t) {\rm d}t, \quad \hat{G}(x,s):=\int_{0}^{s} \hat{g}(x,t) {\rm d}t.
\]
Hypotheses ${\rm H(f)}$--${\rm H(g)}$ ensure that $\mathscr{E}_w$ is of class $ C^1 $ and weakly sequentially lower semicontinuous; see, e.g., \cite[Lemma 3.1]{GMP}. Under the additional condition
\begin{equation}\label{condition} 
d_M+d< c_1^p c_2 \quad \forall\, M >0,
\end{equation}
it  turns out also coercive, as the next lemma shows.
\begin{lemma}\label{estlemma} 
Let $\mathscr{B}$ be a nonempty bounded set in $C^1(\overline{\Omega})$. If ${\rm H(f)}$, ${\rm H(g)}$, and \eqref{condition} hold true then there exist $\alpha_1 \in (0,1)$, $\alpha_2 > 0$ such that
\begin{equation*}
\mathscr{E}_w(u) \geq\frac{\alpha_1}{p}\|u\|_{1,p}^p - \alpha_2(1+\|u\|_{1,p})\quad\forall\, (u,w)\in W^{1,p}(\Omega)\times\mathscr{B}.
\end{equation*}
\end{lemma}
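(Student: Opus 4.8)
The plan is to bound $\mathscr{E}_w(u)$ from below by estimating each of the four terms separately, uniformly in $w\in\mathscr{B}$. First I would use Proposition \ref{opestimate} to get the lower bound $\frac{1}{p}\int_\Omega G(\nabla u)\,{\rm d}x\geq\frac{c_2}{p}\|\nabla u\|_p^p$, and combine it with the boundary term $\frac{\beta}{p}\|u\|_{p,\partial\Omega}^p$ to produce $\frac{c_2}{p}\|u\|_{\beta,1,p}^p$ after artificially inserting the right constants; then \eqref{equivnorm} converts this into something like $\frac{c_1^p c_2}{p}\|u\|_{1,p}^p$, which is the leading positive term. (One has to be slightly careful: $G$ controls only $\|\nabla u\|_p^p$, not the full $\|u\|_{1,p}^p$, so the boundary term genuinely has to be used, and this is precisely where the equivalence of $\|\cdot\|_{\beta,1,p}$ with $\|\cdot\|_{1,p}$ enters.)

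Next I would estimate the reaction terms. Since $\mathscr{B}$ is bounded in $C^1(\overline{\Omega})$, there is $M>0$ with $\|\nabla w\|_\infty\leq M$ for all $w\in\mathscr{B}$; then $\mathrm{H(f)}$ gives $\hat f(x,s)\leq c_M+d_M|s|^{p-1}$ for the relevant values (using that $\hat f(x,s)=f(x,\underline u(x),\nabla w(x))$ for $s\leq\underline u(x)$, where $\|\underline u\|_\infty\leq 1$, so the same bound with possibly enlarged $c_M$ persists), whence $\hat F(x,s)\leq c_M|s|+\frac{d_M}{p}|s|^p$. Similarly $\mathrm{(g_1)}$, $\mathrm{(g_2)}$ and \eqref{subsolsummability}-type reasoning give $\hat g(x,s)\leq c'+d|s|^{p-1}$ for an appropriate constant $c'$ (for $s\leq\underline u(x)\leq 1$ one uses $\hat g(x,s)=g(x,\underline u(x))\leq g(\cdot,\epsilon\theta)\in L^{p'}(\Omega)\subseteq L^1(\Omega)$; for $s\in(\underline u(x),1]$ monotonicity keeps it $\leq g(x,\underline u(x))$; for $s>1$ apply $\mathrm{(g_2)}$), hence $\hat G(x,s)\leq c''+c'|s|+\frac{d}{p}|s|^p$. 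Integrating over $\Omega$ and applying Hölder's inequality (to the linear-in-$|u|$ terms, bounding $\int_\Omega|u|\,{\rm d}x\leq|\Omega|^{1/p'}\|u\|_p\leq C\|u\|_{1,p}$), I get
\[
\int_\Omega\hat F(\cdot,u)\,{\rm d}x+\int_\Omega\hat G(\cdot,u)\,{\rm d}x\leq\frac{d_M+d}{p}\|u\|_{1,p}^p+C_1+C_2\|u\|_{1,p},
\]
with $C_1,C_2$ depending on $M$, $\Omega$, $\theta$, $\epsilon_0$, $p$ but not on the particular $w$.

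Assembling the pieces yields
\[
\mathscr{E}_w(u)\geq\frac{c_1^p c_2-(d_M+d)}{p}\|u\|_{1,p}^p-C_1-C_2\|u\|_{1,p},
\]
and here condition \eqref{condition}, $d_M+d<c_1^p c_2$, makes the coefficient strictly positive; setting $\alpha_1:=c_1^pc_2-(d_M+d)$ (which I should double-check lies in $(0,1)$, shrinking it harmlessly if needed since $c_1,c_2\in(0,1)$) and $\alpha_2:=\max\{C_1,C_2\}$ gives the claim. The main obstacle I anticipate is purely bookkeeping: handling the truncations $\hat f,\hat g$ below $\underline u$ so that the growth constants $c_M,d_M,c,d$ from $\mathrm{H(f)}$ and $\mathrm{H(g)}$ still apply verbatim (in particular absorbing the contribution of $g(\cdot,\underline u)$, which is merely $L^{p'}$ and not pointwise bounded, into the additive constant via its $L^1$-norm), and making sure every constant is independent of $w$ — which it is, because all $w$-dependence funnels through the single bound $\|\nabla w\|_\infty\leq M$.
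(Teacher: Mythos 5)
Your proposal is correct and follows essentially the same route as the paper: lower-bound the principal part via Proposition \ref{opestimate} and the norm equivalence \eqref{equivnorm}, control $\hat F$ and $\hat G$ using the uniform $C^1$-bound on $\mathscr{B}$, $(\mathrm{g}_1)$--$(\mathrm{g}_2)$ and \eqref{subsolsummability}, and invoke \eqref{condition} to make the top-order coefficient positive. The only imprecision is calling $c'$ a constant in the pointwise bound on $\hat g$ (it is really the $L^{p'}$ function $g(\cdot,\underline u)+c$), but you flag and correctly resolve this by absorbing $\|g(\cdot,\underline u)\|_1$ into the additive constant, exactly as the paper does with its term $K$.
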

\begin{proof}
Put $\hat{M}:=\displaystyle{\sup_{w \in \mathscr{B}}}\|w\|_{C^1(\overline{\Omega})}$. By \eqref{1f}--\eqref{1}, Proposition \ref{opestimate} entails
\begin{equation*}
\begin{split}
\mathscr{E}_w(u) &\geq\frac{c_2}{p} \|\nabla u\|_p^p+\frac{\beta}{p}\|u\|_{p,\partial \Omega}^p
-\int_{\Omega} [f(\cdot,\underline{u},\nabla w)+ g(\cdot,\underline{u})]\underline{u} {\rm d}x \\
&- \int_{\Omega(u>\underline{u})}\left(\int_{\underline{u}}^{u} f(\cdot,t,\nabla w) {\rm d}t \right){\rm d}x
-\int_{\Omega(u>\underline{u})} \left(\int_{\underline{u}}^{u} g(\cdot,t) {\rm d}t \right) {\rm d}x.
\end{split}
\end{equation*}
Hypothesis ${\rm H(f)} $ along with H\"older's inequality imply
\begin{equation*}
\begin{split}
\int_{\Omega(u>\underline{u})}\left( \int_{\underline{u}}^{u} f(\cdot,t,\nabla w){\rm d}t \right){\rm d}x
&\leq\int_{\Omega(u>\underline{u})}\left( \int_{0}^{u} f(\cdot,t,\nabla w){\rm d}t \right){\rm d}x \\
&\leq c_{\hat{M}}|\Omega|^{\frac{1}{p'}} \|u\|_p + \frac{d_{\hat{M}}}{p} \|u\|_p^p \\
&\leq c_{\hat{M}} |\Omega|^{\frac{1}{p'}} \|u\|_{1,p} +
\frac{d_{\hat{M}}}{p} \|u\|_{1,p}^p.
\end{split}
\end{equation*}
Exploiting \eqref{subsupnorm}, ${\rm (g_2)}$, and H\"older's inequality again, we have
\begin{equation*}
\begin{split}
&\int_{\Omega(u>\underline{u})} \left(\int_{\underline{u}}^{u} g(\cdot,t) {\rm d}t \right) {\rm d}x \\
&\leq\int_{\Omega(u>\underline{u})}\left( \int_{\underline{u}}^{1} g(\cdot,t) {\rm d}t \right) {\rm d}x
+ \int_{\Omega(u>1)}\left( \int_{1}^{u} g(\cdot,t) {\rm d}t \right) {\rm d}x \\
&\leq \int_{\Omega(u>\underline{u})} g(\cdot,\underline{u}) {\rm d}x
+\int_{\Omega(u>1)}\left( \int_1^{u} (c+dt^{p-1}) {\rm d}t \right) {\rm d}x \\
&\leq \int_{\Omega} g(\cdot,\underline{u}) {\rm d}x+ c |\Omega|^{\frac{1}{p'}}\|u\|_p + \frac{d}{p} \|u\|_p^p \\
& \leq\int_{\Omega} g(\cdot,\underline{u}) {\rm d}x+ c |\Omega|^{\frac{1}{p'}} \|u\|_{1,p} + \frac{d}{p} \|u\|_{1,p}^p.
\end{split}
\end{equation*}
Hence, through \eqref{equivnorm} we easily arrive at
\begin{equation*}
\begin{split}
\mathscr{E}_w(u) &\geq\frac{c_2}{p}\|u\|_{\beta,1,p}^p-\frac{d_{\hat{M}}+d}{p} \|u\|_{1,p}^p
- (c_{\hat{M}} + c) |\Omega|^{\frac{1}{p'}}\|u\|_p - K \\
&\geq\frac{c_1^p c_2 - d_{\hat{M}} - d}{p} \|u\|_{1,p}^p- (c_{\hat{M}} + c) |\Omega|^{\frac{1}{p'}} \|u\|_{1,p} - K\\
&\geq\frac{c_1^p c_2 - d_{\hat{M}} - d}{p}\|u\|_{1,p}^p-\max\{(c_{\hat{M}}+c) |\Omega|^{\frac{1}{p'}},K\} (1+\|u\|_{1,p}),
\end{split}
\end{equation*}
where
\begin{equation*}
\begin{split}
K &:=\int_{\Omega} [f(\cdot,\underline{u},\nabla w)]+g(\cdot,\underline{u})]\underline{u} {\rm d}x
+\int_{\Omega} g(\cdot,\underline{u}) {\rm d}x \\
&\leq\int_{\Omega} (c_{\hat{M}} + d_{\hat{M}}){\rm d}x + 2 \int_{\Omega} g(\cdot,\epsilon \theta) {\rm d}x 
\leq (c_{\hat{M}} + d_{\hat{M}}) |\Omega| + 2 \|g(\cdot, \epsilon
\theta)\|_{p'} |\Omega|^{\frac{1}{p}}
\end{split}
\end{equation*}
due to ${\rm H(f)}$ and \eqref{subsupnorm}--\eqref{subsolsummability}. Now, the conclusion follows from \eqref{condition}. 
\end{proof}
\begin{rmk}\label{regularity}
A standard application of Moser's iteration technique \cite{Le} shows that any solution to \eqref{truncprob} lies in
$L^\infty(\Omega)$. By Liebermann's regularity theory \cite{Li}, it actually is H\"older continuous up to the boundary.
\end{rmk}
\begin{lemma}\label{criticalprop} 
Let  ${\rm H(f)}$, ${\rm H(g)}$, and \eqref{condition} be satisfied. Then
\[
\emptyset \neq {\rm Crit}(\mathscr{E}_w) \subseteq U_w \cap \{ u \in C^1(\overline{\Omega}): u \geq \underline{u}\}.
\]
\end{lemma}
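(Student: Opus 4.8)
\emph{Non-emptiness.} The plan is to apply the direct method to $\mathscr{E}_w$ on $W^{1,p}(\Omega)$. By the discussion preceding Lemma~\ref{estlemma}, $\mathscr{E}_w$ is of class $C^1$ and weakly sequentially lower semicontinuous; by Lemma~\ref{estlemma} (applied with $\mathscr{B}:=\{w\}$, which is a bounded subset of $C^1(\overline{\Omega})$) it is coercive, since $\alpha_1>0$ forces $\mathscr{E}_w(u)\to+\infty$ as $\|u\|_{1,p}\to+\infty$. Hence $\mathscr{E}_w$ attains a global minimum at some $u\in W^{1,p}(\Omega)$, and $u\in{\rm Crit}(\mathscr{E}_w)$; in particular ${\rm Crit}(\mathscr{E}_w)\neq\emptyset$.

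\emph{Critical points solve the truncated problem and are regular.} Let $u\in{\rm Crit}(\mathscr{E}_w)$. Computing $\langle\mathscr{E}_w'(u),v\rangle=0$ for all $v\in W^{1,p}(\Omega)$ shows that $u$ is a weak solution of \eqref{truncprob}, i.e.
\[
\int_{\Omega}\langle a(\nabla u),\nabla v\rangle{\rm d}x+\beta\int_{\partial\Omega}|u|^{p-2}uv{\rm d}\sigma=\int_{\Omega}[\hat f(\cdot,u)+\hat g(\cdot,u)]v{\rm d}x\quad\forall\,v\in W^{1,p}(\Omega).
\]
By Remark~\ref{regularity}, $u\in L^\infty(\Omega)$ and then $u\in C^{1,\alpha}(\overline{\Omega})$ for some $\alpha\in(0,1)$ via Liebermann's regularity theory~\cite{Li}.

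\emph{Comparison $u\ge\underline u$.} The key step is to show $u\ge\underline u$ a.e., which then removes the truncations and yields $u\in U_w$. I would test the weak formulation for $u$ with $v:=(\underline u-u)^+\in W^{1,p}(\Omega)_+$ and the subsolution inequality \eqref{subsol} for $\underline u$ (recall $\underline u\in\underline U_w$ by Lemma~\ref{subsollemma}) with the same $v$; on the set $\Omega(\underline u>u)$ one has, by \eqref{1f}--\eqref{1}, $\hat f(\cdot,u)=f(\cdot,\underline u,\nabla w)$ and $\hat g(\cdot,u)=g(\cdot,\underline u)$, so the two right-hand sides coincide there. Subtracting gives
\[
\int_{\Omega}\langle a(\nabla\underline u)-a(\nabla u),\nabla(\underline u-u)^+\rangle{\rm d}x+\beta\int_{\partial\Omega}\big(|\underline u|^{p-2}\underline u-|u|^{p-2}u\big)(\underline u-u)^+{\rm d}\sigma\le 0.
\]
The boundary integrand is $\ge 0$ by monotonicity of $t\mapsto|t|^{p-2}t$, and the volume integrand is $\ge 0$ (indeed $>0$ where $\nabla\underline u\neq\nabla u$) by strict monotonicity of $a$; hence $\nabla(\underline u-u)^+=0$ a.e. and the boundary term vanishes. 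Combined with $\beta>0$, a standard argument (cf.\ the norm equivalence \eqref{equivnorm}) forces $(\underline u-u)^+=0$, i.e.\ $u\ge\underline u$. Consequently $u>0$ in $\Omega$, $\hat f(\cdot,u)=f(\cdot,u,\nabla w)$, $\hat g(\cdot,u)=g(\cdot,u)$, so $u$ solves \eqref{auxprob}, i.e.\ $u\in U_w$; moreover $u\in C^1(\overline{\Omega})$ with $u\ge\underline u$. This proves the inclusion ${\rm Crit}(\mathscr{E}_w)\subseteq U_w\cap\{u\in C^1(\overline{\Omega}):u\ge\underline u\}$ and completes the proof.

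\emph{Main obstacle.} The delicate point is the comparison argument: one must check that $(\underline u-u)^+$ is a legitimate test function in both \eqref{subsol} and the weak formulation for $u$, and carefully match the truncated nonlinearities $\hat f,\hat g$ with $f(\cdot,\underline u,\nabla w),g(\cdot,\underline u)$ precisely on $\Omega(\underline u>u)$, using the monotonicity in the second variable only on $(0,1]$ and the definition \eqref{1} elsewhere. Everything else is routine.
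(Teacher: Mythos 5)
Your proposal is correct and follows essentially the same route as the paper: direct method for non-emptiness, then testing with $(\underline u-u)^+$ and strict monotonicity of $a$ (plus the boundary term) to get $u\ge\underline u$, which removes the truncations. The only cosmetic difference is that the paper drops the nonnegative $\hat f$-term and invokes the refined subsolution inequality \eqref{subformula} (involving $g$ only), whereas you match both truncated nonlinearities with those in \eqref{subsol} on $\Omega(\underline u>u)$; both are valid.
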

\begin{proof}
Since $\mathscr{E}_w$ is coercive (cf. Lemma \ref{estlemma}), the Weierstrass-Tonelli theorem produces ${\rm Crit}(\mathscr{E}_w)\neq\emptyset$. Pick any $ u \in {\rm Crit}(\mathscr{E}_w) $,  test \eqref{truncprob} with $ (\underline{u}-u)^+ $, and exploit \eqref{1f}--\eqref{1}, besides \eqref{subformula}, to achieve
\begin{equation*}
\begin{split}
&\int_{\Omega} \langle a(\nabla u),\nabla(\underline{u}-u)^+ \rangle {\rm d}x + \beta \int_{\partial \Omega} |u|^{p-2}u(\underline{u}-u)^+ {\rm d}\sigma \\
&= \int_{\Omega} [\hat{f}(\cdot,u) + \hat{g}(\cdot,u)](\underline{u}-u)^+ {\rm d}x \\
&\geq\int_{\Omega}\hat{g}(\cdot,u)(\underline{u}-u)^+{\rm d}x
=\int_{\Omega} g(\cdot,\underline{u})(\underline{u}-u)^+{\rm d}x \\
&\geq \int_{\Omega} \langle a(\nabla\underline{u}),\nabla(\underline{u}-u)^+ \rangle {\rm d}x
+ \beta\int_{\partial \Omega}|\underline{u}|^{p-2}\underline{u}(\underline{u}-u)^+ {\rm d}\sigma.
\end{split}
\end{equation*}
Rearranging terms we get
\[
\int_{\Omega} \langle a(\nabla \underline{u}) - a(\nabla
u),\nabla(\underline{u}-u)^+ \rangle {\rm d}x + \beta \int_{\partial
\Omega} (|\underline{u}|^{p-2}\underline{u} -
|u|^{p-2}u)(\underline{u}-u)^+ {\rm d}\sigma \leq 0.
\]
The strict monotonicity of $a$, combined with \cite[Lemma A.0.5]{P}, entail
\[
\nabla(\underline{u}-u)^+ =0\;\;\text{in}\;\; \Omega,\quad(\underline{u}-u)^+ =0\;\;\text{on}\;\; \partial \Omega.
\]
So, $\|(\underline{u}-u)^+\|_{\beta,1,p} = 0$, which means $u\geq \underline{u}$. Finally, by \eqref{1f}--\eqref{1} one has $u \in U_w $, while $u\in C^1(\overline{\Omega})$ according to Remark \ref{regularity}.
\end{proof}
For every $w\in C^1(\overline{\Omega})$ we define
\[
\mathscr{S}(w):=\{ u\in C^1(\overline{\Omega}): u \in U_w,\, u \geq \underline{u},\, \mathscr{E}_w(u) < 1 \}.
\]
\begin{lemma}\label{Scompact}
Under assumptions ${\rm H(f)}$, ${\rm H(g)}$, and \eqref{condition}, the multifunction $\mathscr{S}:C^1(\overline{\Omega})\to 2^{C^1(\overline{\Omega})}$ takes nonempty values and maps bounded sets into relatively compact sets.
\end{lemma}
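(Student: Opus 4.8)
The plan is to prove the two assertions separately: first that $\mathscr{S}(w)\neq\emptyset$ for every $w$, then that $\mathscr{S}$ carries bounded sets to relatively compact ones.

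\textbf{Nonemptiness.} By Lemma \ref{criticalprop} we already know $\mathrm{Crit}(\mathscr{E}_w)$ is nonempty and contained in $U_w\cap\{u\in C^1(\overline{\Omega}):u\geq\underline{u}\}$; what is missing is the sublevel bound $\mathscr{E}_w(u)<1$. The point is that coercivity actually forces the minimum value of $\mathscr{E}_w$ to be bounded above independently of $w$ over a bounded $\mathscr{B}\ni w$: using Lemma \ref{estlemma} with the singleton $\mathscr{B}=\{w\}$ (or a fixed bounded set we have in mind) gives $\inf_{W^{1,p}(\Omega)}\mathscr{E}_w\leq\mathscr{E}_w(0)$. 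Now $\mathscr{E}_w(0)=-\int_\Omega\hat F(\cdot,0)\,\mathrm dx-\int_\Omega\hat G(\cdot,0)\,\mathrm dx$, and since $\hat F(x,0)=\hat G(x,0)=0$ we get $\mathscr{E}_w(0)=0<1$. Hence any global minimizer $u$ — which exists by Weierstrass–Tonelli and lies in $\mathrm{Crit}(\mathscr{E}_w)$, so in $U_w\cap C^1(\overline{\Omega})$ with $u\geq\underline{u}$ — satisfies $\mathscr{E}_w(u)\leq 0<1$, i.e. $u\in\mathscr{S}(w)$.

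\textbf{Relative compactness.} Fix a bounded set $\mathscr{B}\subseteq C^1(\overline{\Omega})$ and let $\{u_n\}\subseteq\bigcup_{w\in\mathscr{B}}\mathscr{S}(w)$, say $u_n\in\mathscr{S}(w_n)$ with $w_n\in\mathscr{B}$. First I would get a uniform $W^{1,p}$ bound: from $\mathscr{E}_{w_n}(u_n)<1$ and the lower estimate of Lemma \ref{estlemma} (applied with the bounded set $\mathscr{B}$, so the constants $\alpha_1,\alpha_2$ are uniform), $\tfrac{\alpha_1}{p}\|u_n\|_{1,p}^p-\alpha_2(1+\|u_n\|_{1,p})<1$, which bounds $\{u_n\}$ in $W^{1,p}(\Omega)$. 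Next I would promote this to a uniform $L^\infty$ bound: since $u_n\in U_{w_n}$ solves the truncated problem \eqref{truncprob} with right-hand side $\hat f(\cdot,u_n)+\hat g(\cdot,u_n)$, and H(f), (g$_1$)–(g$_2$), together with $\|w_n\|_{C^1(\overline\Omega)}\le\hat M:=\sup_\mathscr{B}\|\cdot\|_{C^1}$, give a bound $\hat f(x,s)+\hat g(x,s)\le c_{\hat M}+c+(d_{\hat M}+d)|s|^{p-1}$ (using $\underline u\le u_n$ and $\|\underline u\|_\infty\le1$ to control the truncated pieces, plus \eqref{subsolsummability}), Moser iteration as in Remark \ref{regularity} yields $\|u_n\|_\infty\le C$ with $C$ depending only on $\mathscr{B}$ and the data. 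Then the right-hand sides $\hat f(\cdot,u_n)+\hat g(\cdot,u_n)$ are bounded in $L^\infty(\Omega)$, uniformly in $n$, so Lieberman's global regularity \cite{Li} gives a uniform bound $\|u_n\|_{C^{1,\alpha}(\overline\Omega)}\le C'$ for some $\alpha\in(0,1)$. By the compact embedding $C^{1,\alpha}(\overline\Omega)\hookrightarrow C^1(\overline\Omega)$, a subsequence of $\{u_n\}$ converges in $C^1(\overline\Omega)$, which is exactly relative compactness of $\bigcup_{w\in\mathscr{B}}\mathscr{S}(w)$.

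\textbf{Main obstacle.} The delicate point is making the $L^\infty$ bound genuinely \emph{uniform} over $w\in\mathscr{B}$ rather than just finite for each fixed $w$: one must check that Moser's iteration constants depend on the right-hand side only through the growth constants $c_{\hat M}, c, d_{\hat M}, d$ and $\|g(\cdot,\epsilon\theta)\|_{p'}$ (all controlled by $\mathscr{B}$ via $\hat M$), not on $u_n$ or $w_n$ individually — this is where the structural hypotheses H(f), H(g) and the lower norm bound $\underline u\le u_n$ are essential. Everything downstream (Lieberman estimate, compact embedding) is then routine. A minor care point is that $\hat f,\hat g$ are merely Carathéodory and possibly discontinuous in $s$ at $s=\underline u(x)$, but they are bounded on bounded $s$-ranges by the above, so Moser's argument still applies.
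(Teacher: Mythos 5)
Your proof is correct and follows essentially the same route as the paper's: nonemptiness comes from the global minimizer of the coercive functional $\mathscr{E}_w$ together with $\mathscr{E}_w(0)=0<1$, and relative compactness from the uniform $W^{1,p}$ bound supplied by Lemma \ref{estlemma} on the sublevel $\mathscr{E}_w<1$, upgraded to a uniform $C^{1,\alpha}$ bound by nonlinear regularity and concluded via the compact embedding $C^{1,\alpha}(\overline{\Omega})\hookrightarrow C^1(\overline{\Omega})$. You merely spell out the intermediate Moser/Lieberman step that the paper compresses into a single citation of \cite{Li}; the one small overstatement is that the singular part $\hat g(\cdot,u_n)$ is dominated by $g(\cdot,\underline{u})\in L^{p'}(\Omega)$ rather than being bounded in $L^\infty(\Omega)$, a point the paper glosses over in exactly the same way (Remark \ref{regularity}).
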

\begin{proof}
If $ w\in C^1(\overline{\Omega})$, then there exists $ \hat u_w\in {\rm Crit}(\mathscr{E}_w) $ such that
\[
\hat u_w\in C^1(\overline{\Omega}),\quad\hat u_w\geq\underline{u},\quad\mathscr{E}_w(\hat u_w) = \inf_{W^{1,p}(\Omega)} \mathscr{E}_w \leq
\mathscr{E}_w(0) = 0 < 1;
\]
cf. the proof of Lemma \ref{criticalprop}. Hence, $\mathscr{S}(w)\neq\emptyset$, because $\hat u_w \in \mathscr{S}(w)$. Let
$\mathscr{B}\subseteq C^1(\overline{\Omega}) $ nonempty bounded. From Lemma \ref{estlemma} it follows
\[
\frac{\alpha_1}{p}\|u\|_{1,p}^p-\alpha_2(1+\|u\|_{1,p})\leq\mathscr{E}_w(u)<1\;\;\forall\, u\in\mathscr{S}(w),\;w\in \mathscr{B}, 
\]
whence $\mathscr{S}(\mathscr{B})$ turns out bounded in $W^{1,p}(\Omega)$. By nonlinear regularity theory \cite{Li}, the same holds when $C^{1,\alpha}(\overline{\Omega})$, with suitable $ \alpha\in (0,1)$, replaces $W^{1,p}(\Omega)$. Recalling that $C^{1,\alpha}(\overline{\Omega})\hookrightarrow C^1(\overline{\Omega})$ compactly yields the conclusion.
\end{proof}

To see that $\mathscr{S}$ is lower semicontinuous, we shall employ the next technical lemma.

\begin{lemma}\label{reclemma} 
Let $\alpha,\beta,\gamma > 0$, let $1< p <+\infty$, and let $\{a_k\}\subseteq [0,+\infty)$ satisfy the recursive relation
\begin{equation}\label{reclaw} 
\alpha a_k^p\leq\beta a_k + \gamma a_{k-1}^p\;\;\forall\, k \in \N.
\end{equation}
If $\gamma<\alpha$, then the sequence $\{a_k\}$ is bounded.
\end{lemma}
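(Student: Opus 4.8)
The plan is to show that $\{a_k\}$ cannot escape to infinity by analyzing the recursion $\alpha a_k^p \le \beta a_k + \gamma a_{k-1}^p$ as a one-step map and locating a bound that, once reached, is never exceeded. First I would treat the quadratic-type inequality in $a_k$: for fixed $a_{k-1}$, the set of admissible $a_k \ge 0$ is bounded, and in fact $a_k \le \Phi(a_{k-1})$, where $\Phi(t)$ is the largest nonnegative root of $\alpha s^p - \beta s - \gamma t^p = 0$. Since $\gamma < \alpha$, for large $t$ this root behaves like $(\gamma/\alpha)^{1/p} t < t$, so $\Phi(t) < t$ for all sufficiently large $t$; more precisely there is a threshold $R > 0$ (depending only on $\alpha,\beta,\gamma,p$) such that $t \ge R$ implies $\Phi(t) \le t$.

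The cleanest way to make this rigorous without computing roots explicitly is to exhibit $R$ directly: choose $R$ so large that $\beta R + \gamma R^p \le \alpha R^p$, i.e. $\beta R^{1-p} + \gamma \le \alpha$, which is possible precisely because $\gamma < \alpha$ (take $R$ with $\beta R^{1-p} < \alpha - \gamma$, using $p > 1$ so $R^{1-p} \to 0$). Then I claim $a_{k-1} \le \max\{a_0, R\} =: C$ forces $a_k \le C$. Indeed, if $a_{k-1} \le C$ then $\alpha a_k^p \le \beta a_k + \gamma C^p$; if we had $a_k > C \ge R$, then $\beta a_k + \gamma C^p \le \beta a_k + \gamma a_k^p \le \beta a_k + (\alpha - \beta a_k^{1-p}) a_k^p = \alpha a_k^p$ would be consistent, so this crude bound needs a touch more care — instead argue: $\alpha a_k^p - \beta a_k = \alpha a_k^p(1 - \beta \alpha^{-1} a_k^{1-p})$, and for $a_k \ge R$ we have $1 - \beta\alpha^{-1} a_k^{1-p} \ge 1 - \beta\alpha^{-1} R^{1-p} \ge \gamma/\alpha$ by the choice of $R$, hence $\alpha a_k^p - \beta a_k \ge \gamma a_k^p$, which combined with the recursion gives $\gamma a_k^p \le \gamma a_{k-1}^p$, i.e. $a_k \le a_{k-1} \le C$, contradicting $a_k > C$. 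So $a_k \le C$, and by induction the whole sequence is bounded by $C = \max\{a_0, R\}$.

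The main obstacle — really the only subtle point — is pinning down the constant $R$ correctly: the term $\beta a_k$ is lower-order compared to $a_k^p$ since $p > 1$, so it is harmless once $a_k$ is large, but one must phrase the "absorption" of $\beta a_k$ into $\alpha a_k^p$ uniformly, which is exactly what the inequality $\beta\alpha^{-1} R^{1-p} \le 1 - \gamma/\alpha$ encodes. I would present the argument as: (1) fix such an $R$; (2) set $C := \max\{a_0, R\}$; (3) prove by induction on $k$ that $a_k \le C$, the inductive step being the dichotomy above (either $a_k \le R \le C$ directly, or $a_k \ge R$ and the recursion yields $a_k \le a_{k-1} \le C$). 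This gives boundedness with an explicit bound, which is all that is claimed.
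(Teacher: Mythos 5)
Your argument is correct, and it proves the lemma by a genuinely different mechanism than the paper. The paper absorbs the lower-order term via $a_k\le T+T^{1-p}a_k^p$, rewrites the recursion as $(\alpha-\beta T^{1-p})a_k^p\le \beta T+\gamma a_{k-1}^p$, and then takes the power $\sigma=1/p$ (using subadditivity of $t\mapsto t^\sigma$) to obtain a \emph{linear} recursion $a_k\le A+Ba_{k-1}$ with $B<1$ for $T$ large, which is then iterated. You instead exhibit a forward-invariant interval: choosing $R$ with $\beta R^{1-p}\le\alpha-\gamma$ (possible since $p>1$ and $\gamma<\alpha$), you show that $a_k\ge R$ forces $\gamma a_k^p\le\alpha a_k^p-\beta a_k\le\gamma a_{k-1}^p$, hence $a_k\le a_{k-1}$, so $C:=\max\{a_0,R\}$ bounds the whole sequence by induction. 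The key absorption step is the same in spirit (the term $\beta a_k$ is negligible against $\alpha a_k^p$ once $a_k$ is large), but your route avoids the subadditivity inequality and the geometric-series iteration entirely, and yields the cleaner explicit bound $\max\{a_0,R\}$; the paper's route is slightly longer but packages the conclusion as a standard contraction-type linear recursion. The only cosmetic blemish is the digression in your second paragraph where you first set up a ``crude bound'' and then discard it -- in a final write-up you should go straight to the dichotomy $a_k<R$ versus $a_k\ge R$, which is all that is needed.
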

\begin{proof}
Using the obvious inequality
\begin{equation*}
a_k \leq T + T^{1-p} a_k^p,\quad T > 0,
\end{equation*}
\eqref{reclaw} becomes
\begin{equation*}
\left(\alpha-\beta T^{1-p}\right) a_k^p\leq\beta T+\gamma a_{k-1}^p\;\;\forall\, k \in \N.
\end{equation*}
Since $\sigma:= 1/p < 1$, this entails
\begin{equation*}
\left(\alpha-\beta T^{1-p}\right)^\sigma a_k \leq\left(\beta T+\gamma a_{k-1}^p\right)^\sigma\leq (\beta T)^\sigma +
\gamma^\sigma a_{k-1}
\end{equation*}
or, equivalently,
\begin{equation}\label{recfinal} 
a_k \leq\left(\frac{\beta T}{\alpha-\beta T^{1-p}}\right)^\sigma
+\left(\frac{\gamma}{\alpha-\beta T^{1-p}}\right)^\sigma a_{k-1},\quad k \in \N,
\end{equation}
provided $T>0 $ is large enough. Choosing $T>\left( \frac{\beta}{\alpha - \gamma} \right)^{\frac{1}{p-1}}$, the coefficient of
$ a_{k-1} $ turns out strictly less than 1. A standard computation based on \eqref{recfinal} completes  the proof.
\end{proof}
\begin{lemma}\label{Slsc}
Suppose ${\rm H(f)}$--${\rm H(g)}$ hold and, moreover,
\begin{equation}\label{reccond}
d_M+d< \frac{c_1^p c_2}{p}\quad\forall\, M>0.
\end{equation}
Then the multifunction $\mathscr{S}:C^1(\overline{\Omega})\to 2^{C^1(\overline{\Omega})}$ is lower semicontinuous.
\end{lemma}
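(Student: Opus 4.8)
The goal is to show: given $w_n \to w$ in $C^1(\overline{\Omega})$ and $u \in \mathscr{S}(w)$, one can find $u_n \in \mathscr{S}(w_n)$ with $u_n \to u$ in $C^1(\overline{\Omega})$. The plan is to build the approximating sequence recursively, solving at each step the truncated problem \eqref{truncprob} relative to $w_n$ but forcing proximity to $u$ by an iterative scheme, and then to use Lemma \ref{reclemma} to keep the iterates bounded, Lemma \ref{Scompact} to extract $C^1$-convergence, and a limiting/uniqueness argument to identify the limit with $u$.

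First I would fix $n$ and define, for $k \in \N$, a sequence $u_{n,k}$ by setting $u_{n,0} := u$ and letting $u_{n,k}$ be the minimizer of $\mathscr{E}_{w_n}$ obtained by Lemma \ref{criticalprop} — or, better, a solution of the truncated problem for $w_n$ in which the term $\hat f(x, \cdot)$ is frozen at $u_{n,k-1}$, i.e. the right-hand side is $f(x, u_{n,k-1}, \nabla w_n) + \hat g(x, u_{n,k})$. This decouples the convection dependence, so that each $u_{n,k}$ solves a problem whose reaction has no gradient term, and standard variational arguments (Lemma \ref{criticalprop} style: coercivity via Lemma \ref{estlemma}, weak lower semicontinuity) give existence with $u_{n,k} \geq \underline{u}$ and $u_{n,k} \in C^1(\overline{\Omega})$. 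The key estimate is obtained by testing the equations for $u_{n,k}$ and $u$ (recall $u \in U_w$) with $u_{n,k} - u$: using the strict monotonicity of $a$, Proposition \ref{opestimate}, hypothesis $\mathrm{H(f)}$ (to bound $|f(x,u_{n,k-1},\nabla w_n) - f(x, u, \nabla w)|$ by a term controlled by $d_M \|u_{n,k-1} - u\|_p$ plus a term $o(1)$ coming from $\nabla w_n \to \nabla w$), and the monotonicity of $\hat g$, one arrives at a recursive inequality of the form
\[
\alpha \|u_{n,k} - u\|_{1,p}^p \leq \beta_n \|u_{n,k} - u\|_{1,p} + \gamma \|u_{n,k-1} - u\|_{1,p}^p,
\]
with $\alpha$ essentially $c_1^p c_2$, $\gamma$ essentially $d_M$ (times constants), and $\beta_n \to 0$ as $n \to \infty$ (this is where $\nabla w_n \to \nabla w$ enters). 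Condition \eqref{reccond} — note the factor $p$ in the denominator, which is precisely what is needed to absorb the constants arising from the Hölder/Young manipulations used to pass from the pointwise growth of $f$ to the $\|\cdot\|_{1,p}^p$ bound — guarantees $\gamma < \alpha$, so Lemma \ref{reclemma} yields $\sup_k \|u_{n,k} - u\|_{1,p} =: R_n < \infty$; a closer look at \eqref{recfinal} shows moreover that $\limsup_k \|u_{n,k} - u\|_{1,p} \leq C \beta_n^{\sigma}/(1 - (\gamma/\alpha)^\sigma)$, hence $\to 0$ as $n \to \infty$ after the choice $k = k(n) \to \infty$ is made appropriately.

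Next, for each $n$ I would select $k = k(n)$ large enough that $\|u_{n,k(n)} - u\|_{1,p} \leq \eta_n$ with $\eta_n \to 0$, and set $u_n := u_{n,k(n)}$. Since $\{u_n\} \cup \{w_n\}$ is bounded in $C^1(\overline{\Omega})$, Lemma \ref{Scompact} (applied along the bounded set $\mathscr{B} := \{w_n\} \cup \{w\}$, together with the fact that each $u_n$ solves a truncated problem with $C^1$-bounded data, so nonlinear regularity \cite{Li} gives a uniform $C^{1,\alpha}$ bound) shows $\{u_n\}$ is relatively compact in $C^1(\overline{\Omega})$; combined with $u_n \to u$ in $W^{1,p}(\Omega)$, every $C^1$-convergent subsequence must have limit $u$, so $u_n \to u$ in $C^1(\overline{\Omega})$. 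It remains to check $u_n \in \mathscr{S}(w_n)$, i.e. $u_n \in U_{w_n}$, $u_n \geq \underline{u}$, and $\mathscr{E}_{w_n}(u_n) < 1$. The first two are built into the construction; for the frozen-convection scheme, to actually land in $U_{w_n}$ rather than a frozen variant one should instead run the iteration to convergence in $k$ and pass to the limit — i.e. show $u_{n,k} \to u_n^\ast$ as $k \to \infty$ for fixed $n$ (again via the contraction in \eqref{recfinal} the sequence is Cauchy in $W^{1,p}$, hence by the uniform $C^{1,\alpha}$ bound convergent in $C^1$), and the limit $u_n^\ast$ solves the genuine truncated problem for $w_n$, so $u_n^\ast \in U_{w_n}$ with $u_n^\ast \geq \underline{u}$ and $\|u_n^\ast - u\|_{1,p} \leq C\beta_n^\sigma/(1-(\gamma/\alpha)^\sigma) \to 0$. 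Finally $\mathscr{E}_{w_n}(u_n^\ast) < 1$: since $u_n^\ast$ is a critical point of $\mathscr{E}_{w_n}$ it is in fact (as in Lemma \ref{criticalprop}) the global minimizer, so $\mathscr{E}_{w_n}(u_n^\ast) \leq \mathscr{E}_{w_n}(0) = 0 < 1$. Taking $u_n := u_n^\ast$ gives the desired sequence.

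The main obstacle is the derivation of the recursive inequality \eqref{reclaw} with constants that respect \eqref{reccond}: one must carefully track how the growth bound in $\mathrm{H(f)}$, Young's inequality, and the norm equivalence \eqref{equivnorm} combine, since a careless estimate produces an extra factor (a power of $p$, or of $|\Omega|$) that would violate $\gamma < \alpha$ — the sharpened hypothesis \eqref{reccond} compared with \eqref{condition} is exactly the room needed here. A secondary subtlety is that $f$ is only assumed locally bounded in $\xi$, so one must first fix $M$ as a uniform $C^1$-bound on $\{w_n\} \cup \{w\}$ and on the $C^1$-bounds of the iterates supplied by Lemma \ref{Scompact}/regularity, and only then invoke $\mathrm{H(f)}$ with the constants $c_M, d_M$ corresponding to that fixed $M$; the argument is not circular because the regularity bound depends only on the (fixed) data bounds, not on the particular iterate.
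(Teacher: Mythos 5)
Your overall architecture (freeze the convection term, iterate in $k$, control the iterates with Lemma \ref{reclemma}, pass to the limit, identify the limit) matches the paper's, but the two steps on which your argument actually turns are not available under the stated hypotheses. First, your recursive inequality is derived by testing the difference of the equations for $u_{n,k}$ and $u$ with $u_{n,k}-u$ and bounding $|f(x,u_{n,k-1},\nabla w_n)-f(x,u,\nabla w)|$ by $d_M$ times a quantity controlled by $\|u_{n,k-1}-u\|_p$ plus an $o(1)$ term. Hypothesis ${\rm H(f)}$ is only a one-sided growth bound $f(x,s,\xi)\le c_M+d_M|s|^{p-1}$ for $|\xi|\le M$; it provides no modulus of continuity of $f$ in $s$ (that kind of control is precisely what ${\rm H''(f)}$ adds in Section \ref{S4}, and only for $p=2$). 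So the contraction-around-$u$ estimate, and with it the conclusion $\|u_{n,k}-u\|_{1,p}\to 0$, cannot be justified. The paper's proof avoids differences of $f$ altogether: it freezes \emph{both} $f$ and $\hat g$ at the previous iterate, so each frozen problem has a strictly convex energy and hence a \emph{unique} solution; the recursion fed into Lemma \ref{reclemma} is extracted from the energy inequality $\mathscr{E}_{u_n^{k-1},w_n}(u_n^k)\le 0$ and bounds $\|u_n^k\|_{1,p}$ itself rather than a difference; and the identification $u_n\to\tilde u$ is obtained from the double limit lemma, the $n$-limit at fixed $k$ being handled by induction together with the uniqueness of the solution of the limiting frozen problem. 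Your variant, which keeps $\hat g(x,u_{n,k})$ depending on the unknown, also forfeits this uniqueness and with it the identification of the base-case limit.

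Second, your verification of $\mathscr{E}_{w_n}(u_n^\ast)<1$ asserts that $u_n^\ast$, being a critical point of $\mathscr{E}_{w_n}$, is the global minimizer and therefore has energy $\le 0$. But $\mathscr{E}_{w_n}$ is not convex, so a critical point need not minimize; moreover, if your scheme really produced the global minimizer it could not simultaneously converge to an \emph{arbitrary} $\tilde u\in\mathscr{S}(w)$ with $0<\mathscr{E}_w(\tilde u)<1$, which is what lower semicontinuity requires. The paper instead proves $\mathscr{E}_{w_n}(u_n)\to\mathscr{E}_w(\tilde u)<1$ by continuity of the energies along the convergences $w_n\to w$ and $u_n\to\tilde u$; this is exactly why $\mathscr{S}(w)$ is defined by the open condition $\mathscr{E}_w(u)<1$ rather than by $\mathscr{E}_w(u)\le 0$.
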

\begin{proof}
The proof is patterned after that of \cite[Lemma 20]{LMZ}. So, some details will be omitted. Let
\begin{equation}\label{convwn}
w_n \to w\;\;\mbox{in}\;\; C^1(\overline{\Omega}).
\end{equation}
We claim that to each $\tilde u\in\mathscr{S}(w)$ there corresponds a sequence $\{ u_n\}\subseteq C^1(\overline{\Omega})$ enjoying the following properties:
\[
u_n \in \mathscr{S}(w_n),\;\; n\in\N;\quad u_n\to\tilde u\;\;\text{in}\;\; C^1(\overline{\Omega}).
\]
Fix $\tilde u\in\mathscr{S}(w)$. For every $n\in\N$, consider the auxiliary problem
\begin{equation}\label{vwnprob} \tag{${\rm P}_{\tilde u,w_n}$}
\left\{
\begin{array}{ll}
- {\rm div}\, a(\nabla u) = f(x,\tilde u,\nabla w_n) + \hat{g}(x,\tilde u)\;\;&\mbox{in}\;\;\Omega, \\
u > 0\;\; &\mbox{in}\;\;\Omega, \\
\displaystyle{\frac{\partial u}{\partial \nu_a}}+\beta |u|^{p-2}u= 0\;\;&\mbox{on}\;\;\partial \Omega,
\end{array}
\right.
\end{equation}
with $\hat{g}(x,s)$ given by \eqref{1}. One has $\hat{g}(x,\tilde u)=g(x,\tilde u)$, because $\tilde u\in\mathscr{S}(w)$, while the associated energy functional writes
\begin{equation*}
\begin{split}
\mathscr{E}_{\tilde u,w_n}(u)&:=\frac{1}{p} \int_{\Omega} G(\nabla u){\rm d}x+\beta \int_{\partial \Omega}|u|^p {\rm d}\sigma\\
&-\int_{\Omega} f(x,\tilde u,\nabla w_n)u{\rm d}x-\int_{\Omega}\hat{g}(x,\tilde u)u{\rm d}x,\;\; u\in W^{1,p}(\Omega).
\end{split}
\end{equation*}
Since $\mathscr{E}_{\tilde u,w_n}$ turns out strictly convex, the same argument exploited to show Lemma \ref{criticalprop} yields here a unique solution $u_n^0\in{\rm int(C^1(\overline{\Omega})_+)}$ of \eqref{vwnprob} such that
\begin{equation}\label{convEn}
\mathscr{E}_{\tilde u,w_n}(u_n^0)\leq 0.
\end{equation}
Via \eqref{convwn}--\eqref{convEn}, reasoning as in Lemmas \ref{estlemma} and \ref{Scompact} (but for $\mathscr{E}_{\tilde u,w} $ instead of $ \mathscr{E}_w $ and $\mathscr{B}:=\{w_n:n \in \N\}$), we deduce that $\{u_n^0\}\subseteq C^1 (\overline{\Omega})$ is relatively compact. Consequently, $u_n^0\to u^0$ in $C^1(\overline{\Omega})$, where a subsequence is considered when necessary. By \eqref{convwn} again and Lebesgue's dominated convergence theorem, $u^0 $ solves problem
$({\rm P}_{\tilde u,w})$. Thus, a fortiori, $u^0 =\tilde u$, because $({\rm P}_{\tilde u,w})$ possesses one solution at most. An induction procedure provides now a sequence $\{u_n^k\}$ such that $u_n^k$ solves problem $({\rm P}_{u_n^{k-1},w_n})$, the inequality $\mathscr{E}_{u_n^{k-1},w_n}(u_n^k)\leq 0$ holds, and
\begin{equation}\label{nindex}
\lim_{n\to+\infty }u_n^k=\tilde u\;\;\mbox{in}\;\; C^1(\overline{\Omega})\;\;\mbox{for all}\;\; k\in\N.
\end{equation}
\underline{\rm Claim}: $\{u_n^k\}_{k \in \N}\subseteq C^1(\overline{\Omega})$ is relatively compact.\\ 
In fact, recalling \eqref{convwn}, pick $M=\displaystyle{\sup_{n\in\N}} \|w_n\|_{C^1(\overline{\Omega})}$. Through H\"older's and Young's inequalities, besides \eqref{subsolsummability}, we obtain
\begin{equation}\label{recprinc} 
\frac{1}{p} \int_{\Omega} G(\nabla u_n^k) {\rm d}x
+\frac{\beta}{p}\int_{\partial \Omega} |u_n^k|^p {\rm d}\sigma\geq\frac{c_1^p c_2}{p}\|u_n^k\|_{1,p}^p,
\end{equation}
\begin{equation}\label{recf}
\begin{split}
&\int_\Omega f(\cdot,u_n^{k-1},\nabla w_n) u_n^k {\rm d}x
\leq c_M |\Omega|^{\frac{1}{p'}}\|u_n^k\|_p + d_M \int_\Omega |u_n^{k-1}|^{p-1}|u_n^k| {\rm d}x \\
&\leq c_M |\Omega|^{\frac{1}{p'}} \|u_n^k\|_p + d_M \left(\frac{1}{p'} \|u_n^{k-1}\|_p^p + \frac{1}{p} \|u_n^k\|_p^p \right),
\end{split}
\end{equation}
as well as
\begin{equation}\label{recg}
\begin{split}
\int_\Omega & \hat{g}(\cdot,u_n^{k-1}) u_n^k {\rm d}x\\
&=\int_{\Omega(u_n^{k-1}\leq 1)} \hat{g}(\cdot,u_n^{k-1}) u_n^k {\rm d}x
+\int_{\Omega(u_n^{k-1}> 1)} \hat{g}(\cdot,u_n^{k-1}) u_n^k {\rm d}x \\
&\leq\int_{\Omega(u_n^{k-1}\leq 1)} g(\cdot,\underline{u}) u_n^k {\rm d}x+\int_{\Omega(u_n^{k-1}> 1)} g(\cdot,u_n^{k-1}) u_n^k {\rm d}x \\
&\leq(\| g(\cdot,\underline{u})\|_{p'}+c|\Omega|^{\frac{1}{p'}}) \|u_n^k\|_p
+d\int_\Omega |u_n^{k-1}|^{p-1}|u_n^k| {\rm d}x \\
&\leq(\| g(\cdot,\underline{u})\|_{p'}+c|\Omega|^{\frac{1}{p'}})\|u_n^k\|_p
+d\left(\frac{1}{p'} \|u_n^{k-1}\|_p^p + \frac{1}{p} \|u_n^k\|_p^p \right).
\end{split}
\end{equation}
Since $\mathscr{E}_{u_n^{k-1},w_n}(u_n^k) \leq 0$, estimates \eqref{recprinc}--\eqref{recg} entail
\begin{equation*}\label{recgeneral}
\begin{split}
&\frac{c_1^p c_2 - d_M - d}{p} \|u_n^k\|_{1,p}^p\\ 
&\leq\left(\| g(\cdot,\underline{u})\|_{p'}+(c_M + c)|\Omega|^{\frac{1}{p'}}\right)\|u_n^k\|_{1,p}
+\frac{d_M + d}{p'} \|u_n^{k-1}\|_{1,p}^p
\end{split}
\end{equation*}
for all $k\in\N$. Thanks to \eqref{reccond}, Lemma \ref{reclemma} applies, and the sequence $\{u_n^k\}_{k \in \N}$ turns out bounded in $W^{1,p}(\Omega)$. Standard arguments involving regularity up to the boundary (cf. the proof of Lemma \ref{Scompact}) yield the claim.

We may thus assume there exists $\{u_n\}\subseteq C^1(\overline{\Omega})$ fulfilling
\begin{equation}\label{kindex}
\lim_{k \to \infty} u_n^k = u_n\;\,\mbox{in}\;\; C^1(\overline{\Omega})
\end{equation}
whatever $n\in\N$. By \eqref{kindex} and Lebesgue's dominated convergence theorem one has $u_n\in U_{w_n}$. Moreover, as in the proof of Lemma \ref{criticalprop}, $u_n \geq \underline{u}$. Due to \eqref{nindex} and \eqref{kindex}, the double limit lemma \cite[Proposition A.2.35]{GP} gives
\begin{equation}\label{doublelimit}
u_n\to\tilde u\;\;\mbox{in}\;\; C^1(\overline{\Omega}).
\end{equation}
Thus, it remains to show that $\mathscr{E}_{w_n}(u_n) < 1$. From \eqref{convwn} we easily infer $\mathscr{E}_{w_n}(\tilde u)\to\mathscr{E}_{w}(\tilde u)$. Since $\mathscr{E}_{w_n}$ is of class $C^1$, via \eqref{convwn} and \eqref{doublelimit} one arrives at 
\[
\lim_{n\to+\infty}\left(\mathscr{E}_{w_n}(u_n) - \mathscr{E}_{w}(\tilde u)\right)=0,
\]
namely $\mathscr{E}_{w_n}(u_n)\to\mathscr{E}_{w}(\tilde u)$. This completes the proof, because $\tilde u\in\mathscr{S}(w)$,
whence $\mathscr{E}_{w}(\tilde u)<1$.
\end{proof}
\begin{lemma}\label{comparison} 
Under ${\rm H(f)}$, ${\rm H(g)}$, and \eqref{condition}, the set $\mathscr{S}(w)$, $w\in C^1(\overline{\Omega})$, is downward directed.
\end{lemma}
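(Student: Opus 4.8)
The plan is the classical ``take the pointwise minimum, then re-solve'' argument, adapted to the sublevel constraint built into $\mathscr{S}$. Fix $w\in C^1(\overline{\Omega})$ and $u_1,u_2\in\mathscr{S}(w)$. Since $\mathscr{S}(w)\subseteq U_w\subseteq\overline{U}_w$, both $u_1$ and $u_2$ are supersolutions of \eqref{auxprob}, so Lemma \ref{supersol} gives $v:=\min\{u_1,u_2\}\in\overline{U}_w\subseteq W^{1,p}(\Omega)$; moreover $v$ is continuous on $\overline{\Omega}$, $v\geq\underline{u}$ (because $u_i\geq\underline{u}$), and $v\leq u_i$ for $i=1,2$. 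It therefore suffices to exhibit $u\in\mathscr{S}(w)$ with $\underline{u}\leq u\leq v$.

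To produce such a $u$, I would solve \eqref{truncprob} further truncated from above at $v$: replace $\hat{f},\hat{g}$ from \eqref{1f}--\eqref{1} by
\[
\hat{f}_v(x,s):=\hat{f}(x,\min\{s,v(x)\}),\qquad \hat{g}_v(x,s):=\hat{g}(x,\min\{s,v(x)\}),
\]
and let $\mathscr{E}_w^v$ be the associated energy functional, obtained from $\mathscr{E}_w$ by using the primitives $\hat{F}_v,\hat{G}_v$ of $\hat{f}_v,\hat{g}_v$ in place of $\hat{F},\hat{G}$. Because $v\geq\underline{u}$ one has $\hat{f}_v(\cdot,v)=f(\cdot,v,\nabla w)$ and $\hat{g}_v(\cdot,v)=g(\cdot,v)$; since $v\in L^\infty(\Omega)$, these facts together with ${\rm (g_1)}$--${\rm (g_2)}$ and \eqref{subsolsummability} keep $\hat{g}_v(\cdot,s)$ dominated in $L^{p'}(\Omega)$ uniformly in $s$. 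Arguing as for $\mathscr{E}_w$ in \cite[Lemma 3.1]{GMP} and Lemma \ref{estlemma}---the superlinear terms in $\hat{f},\hat{g}$ being now suppressed by the upper truncation, so that the reaction part grows at most linearly in $\|\cdot\|_{1,p}$---$\mathscr{E}_w^v\in C^1(W^{1,p}(\Omega))$ is weakly sequentially lower semicontinuous and coercive. Weierstrass--Tonelli then yields a global minimizer $u$, hence a weak solution of the doubly truncated problem.

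Next I would check $\underline{u}\leq u\leq v$ by the sub-supersolution comparison of Lemma \ref{criticalprop}: testing $(\mathscr{E}_w^v)'(u)=0$ with $(\underline{u}-u)^+$ and invoking $f\geq 0$ together with \eqref{subformula} gives, after rearranging,
\[
\int_{\Omega}\langle a(\nabla\underline{u})-a(\nabla u),\nabla(\underline{u}-u)^+\rangle\,{\rm d}x+\beta\int_{\partial\Omega}\bigl(|\underline{u}|^{p-2}\underline{u}-|u|^{p-2}u\bigr)(\underline{u}-u)^+\,{\rm d}\sigma\leq 0,
\]
whence $(\underline{u}-u)^+=0$ by the strict monotonicity of $a$ and \cite[Lemma A.0.5]{P}; testing instead with $(u-v)^+$, noting that on $\Omega(u>v)$ one has $u>v\geq\underline{u}$ so that $\hat{f}_v(\cdot,u)=f(\cdot,v,\nabla w)$ and $\hat{g}_v(\cdot,u)=g(\cdot,v)$, and comparing with the supersolution inequality \eqref{super} for $v$ tested against $(u-v)^+$, one gets $(u-v)^+=0$ in the same way. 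On the resulting order interval both truncations are inactive at $u$, so $\hat{f}_v(\cdot,u)=f(\cdot,u,\nabla w)$ and $\hat{g}_v(\cdot,u)=g(\cdot,u)$, i.e. $u\in U_w$; moreover $u\in C^1(\overline{\Omega})$ by Remark \ref{regularity} and $u\geq\underline{u}$.

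It remains to secure $\mathscr{E}_w(u)<1$, and this is the only genuinely new point. The key observation is that for every $z\in W^{1,p}(\Omega)$ with $0\leq z\leq v$ the upper truncation never activates inside the primitives, so $\hat{F}_v(\cdot,z)=\hat{F}(\cdot,z)$ and $\hat{G}_v(\cdot,z)=\hat{G}(\cdot,z)$ a.e., hence $\mathscr{E}_w^v(z)=\mathscr{E}_w(z)$; applying this to $z=u$ (which satisfies $0\leq\underline{u}\leq u\leq v$) and using that $u$ globally minimizes $\mathscr{E}_w^v$,
\[
\mathscr{E}_w(u)=\mathscr{E}_w^v(u)=\inf_{W^{1,p}(\Omega)}\mathscr{E}_w^v\leq\mathscr{E}_w^v(0)=0<1.
\]
Thus $u\in\mathscr{S}(w)$ with $u\leq v\leq u_i$ for $i=1,2$, proving that $\mathscr{S}(w)$ is downward directed. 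I expect the main obstacle to be precisely this last step---forcing the re-solved solution into the prescribed sublevel set $\{\mathscr{E}_w<1\}$---which is exactly why one keeps the auxiliary functional unchanged on the order interval $[\underline{u},v]$ while controlling its global infimum by $\mathscr{E}_w^v(0)=0$.
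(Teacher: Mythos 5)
Your proposal is correct and follows essentially the same route as the paper: truncate the reaction from below at $\underline{u}$ and from above at $\hat{u}=\min\{u_1,u_2\}$ (which is a supersolution by Lemma \ref{supersol}), minimize the resulting coercive functional, use the comparison arguments of Lemma \ref{criticalprop} to locate the minimizer in $[\underline{u},\hat{u}]$, and conclude $\mathscr{E}_w(u)=\mathscr{E}_w^v(u)\leq 0<1$ since the two functionals coincide on that order interval. The only cosmetic difference is that you write the truncated nonlinearity as $\hat f(x,\min\{s,v(x)\})+\hat g(x,\min\{s,v(x)\})$, which is exactly the paper's $h(x,s)$.
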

\begin{proof}
Let $u_1,u_2\in \mathscr{S}(w)$ and let $\hat{u}:=\min\{u_1,u_2\}$.  By Lemma \ref{supersol} we have $\hat u\in
\overline{U}_w$. Consider the problem
\begin{equation}\label{compareprob}
\left\{
\begin{array}{ll}
- {\rm div} \, a(\nabla u) = h(x,u)\;\; & \mbox{in}\;\;\Omega, \\
u > 0\;\; & \mbox{in}\;\;\Omega, \\
\displaystyle{\frac{\partial u}{\partial \nu_a}}+\beta |u|^{p-2}u= 0\;\; & \mbox{on}\;\;\partial \Omega,
\end{array}
\right.
\end{equation}
where
\begin{equation*}
h(x,s) = \left\{
\begin{array}{ll}
f(x,\underline{u}(x),\nabla w(x)) + g(x,\underline{u}(x))\;\; & \mbox{for}\; s \leq \underline{u}(x), \\
f(x,s,\nabla w(x)) + g(x,s)\;\;  & \mbox{if}\; \underline{u}(x) < s < \hat{u}(x), \\
f(x,\hat u(x),\nabla w(x)) + g(x,\hat u(x))\;\; & \mbox{when} \; s \geq\hat{u}(x).
\end{array}
\right.
\end{equation*}
The associated energy functional writes
\[
\tilde{\mathscr{E}}_w(u):=\frac{1}{p}\int_{\Omega} G(\nabla u){\rm d}x+\beta\int_{\partial \Omega} |u|^p {\rm d}x - \int_{\Omega}{\rm d}x \int_0^{u} h(\cdot,t){\rm d}t,\; u \in W^{1,p}(\Omega).
\]
Arguing as in Lemma \ref{Scompact} produces a solution $\tilde{u}\in C^1(\overline{\Omega})$ to \eqref{compareprob} such that
$\tilde{\mathscr{E}}_w(\tilde{u})\leq 0$. Next, adapt the proof of Lemma \ref{criticalprop} and exploit the fact that $\hat{u}$ is
a supersolution of \eqref{compareprob} to achieve $\underline{u}\leq\tilde{u}\leq\hat{u}$. Consequently, $\tilde{u}\in U_w$ and
\[
\mathscr{E}_w(\tilde{u})=\tilde{\mathscr{E}}_w(\tilde{u})\leq 0<1.
\]
This forces $\tilde{u}\in \mathscr{S}(w)$, besides $\tilde{u}\leq\min\{u_1,u_2\}$.
\end{proof}
\begin{lemma}\label{wellposed} 
If ${\rm H(f)}$, ${\rm H(g)}$, and \eqref{condition} hold true then for every $w\in C^1(\overline{\Omega})$ the set $\mathscr{S}(w)$ possesses absolute minimum.
\end{lemma}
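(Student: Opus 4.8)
The plan is to realize the absolute minimum of $\mathscr{S}(w)$ as the limit of a decreasing sequence extracted from $\mathscr{S}(w)$, using the relative compactness in $C^1(\overline{\Omega})$ granted by Lemma \ref{Scompact} to promote pointwise convergence to $C^1$-convergence, and using the construction carried out in the proof of Lemma \ref{comparison} to keep the energy nonpositive along the sequence.

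First I would fix $w\in C^1(\overline{\Omega})$ and note that, by Lemma \ref{Scompact} with $\mathscr{B}:=\{w\}$, the set $\mathscr{S}(w)$ is nonempty and relatively compact in $C^1(\overline{\Omega})$; being a subset of a separable metric space, it possesses a countable dense subset $\{v_k:k\in\N\}$. Next I would build recursively a sequence $\{u_n\}\subseteq\mathscr{S}(w)$ with $u_1\leq v_1$, with $u_n\leq\min\{u_{n-1},v_n\}$ for $n\geq 2$, and with $\mathscr{E}_w(u_n)\leq 0$ for every $n$: at each step this is exactly the element produced in the proof of Lemma \ref{comparison} when that argument is applied to the pair $v_1,v_1$ (respectively, to $u_{n-1},v_n$). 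Since $\{u_n\}$ is decreasing and bounded below by $\underline{u}$, it converges pointwise, say $u_n\downarrow u^*$ with $u^*\geq\underline{u}$; because $\{u_n\}$ lies in a relatively compact subset of $C^1(\overline{\Omega})$ and any $C^1$-convergent subsequence is forced to have pointwise limit $u^*$, the whole sequence converges to $u^*$ in $C^1(\overline{\Omega})$.

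I would then verify that $u^*\in\mathscr{S}(w)$. Clearly $u^*\in C^1(\overline{\Omega})$ and $u^*\geq\underline{u}$. To get $u^*\in U_w$ I would pass to the limit, as $n\to\infty$, in the weak identity satisfied by each $u_n\in U_w$: the $C^1$-convergence gives $a(\nabla u_n)\to a(\nabla u^*)$ and $|u_n|^{p-2}u_n\to|u^*|^{p-2}u^*$ uniformly on $\overline{\Omega}$, while $f(\cdot,u_n,\nabla w)\to f(\cdot,u^*,\nabla w)$ and $g(\cdot,u_n)\to g(\cdot,u^*)$ almost everywhere; domination for the convection term follows from ${\rm H(f)}$ with $M:=\|\nabla w\|_\infty$ and $\sup_n\|u_n\|_\infty<+\infty$, and for the singular term from splitting $\Omega$ into $\Omega(u_n\leq 1)$, where ${\rm (g_1)}$, $u_n\geq\underline{u}$, and \eqref{subsolsummability} give $g(\cdot,u_n)\leq g(\cdot,\underline{u})\in L^{p'}(\Omega)$, and $\Omega(u_n>1)$, where ${\rm (g_2)}$ bounds $g(\cdot,u_n)$ by the constant $c+d\sup_n\|u_n\|_\infty^{p-1}$; Lebesgue's dominated convergence theorem then yields $u^*\in U_w$. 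Finally, since $\mathscr{E}_w$ is continuous on $W^{1,p}(\Omega)$ and $u_n\to u^*$ there, $\mathscr{E}_w(u^*)=\lim_n\mathscr{E}_w(u_n)\leq 0<1$, so $u^*\in\mathscr{S}(w)$.

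To conclude, I would show $u^*\leq u$ for every $u\in\mathscr{S}(w)$: choosing by density a subsequence $v_{k_j}\to u$ in $C^1(\overline{\Omega})$, one has $u_n\leq u_{k_j}\leq v_{k_j}$ whenever $n\geq k_j$, whence $u^*\leq v_{k_j}$ by letting $n\to\infty$, and then $u^*\leq u$ by letting $j\to\infty$. The delicate point is the strict bound $\mathscr{E}_w(u^*)<1$: mere continuity of $\mathscr{E}_w$ would only deliver $\mathscr{E}_w(u^*)\leq 1$, so it is essential that the minimizing sequence be generated through Lemma \ref{comparison}, whose construction supplies, for any two elements of $\mathscr{S}(w)$, a common lower bound in $\mathscr{S}(w)$ with nonpositive energy.
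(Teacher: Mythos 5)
Your proof is correct, but it takes a genuinely different route from the paper's. The paper invokes Zorn's lemma: every chain $\mathscr{C}\subseteq\mathscr{S}(w)$ admits a sequence converging to $\inf\mathscr{C}$, which by the compactness of Lemma \ref{Scompact} converges in $C^1(\overline{\Omega})$ and furnishes a lower bound of the chain; this yields a minimal element of $\mathscr{S}(w)$, which the downward directedness from Lemma \ref{comparison} then upgrades to an absolute minimum. You instead use separability to extract a countable dense subset $\{v_k\}$ of $\mathscr{S}(w)$ and run the comparison construction recursively along it, producing a single decreasing sequence $\{u_n\}$ that is eventually below every $v_k$ and hence, by density, below every element of $\mathscr{S}(w)$; compactness then identifies its pointwise limit $u^*$ as a $C^1$-limit and as the minimum. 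What your approach buys is, first, a fully sequential argument that avoids Zorn's lemma, and second, an explicit verification that $u^*$ really lies in $\mathscr{S}(w)$: your insistence on generating each $u_n$ through Lemma \ref{comparison}, so that $\mathscr{E}_w(u_n)\leq 0$ rather than merely $\mathscr{E}_w(u_n)<1$, is exactly what makes the strict bound $\mathscr{E}_w(u^*)<1$ survive the passage to the limit --- a point on which the paper's chain argument is silent, since the $C^1$-limit of an arbitrary chain in $\mathscr{S}(w)$ a priori satisfies only $\mathscr{E}_w\leq 1$. The price is a longer verification (the dominated-convergence argument for $u^*\in U_w$, the subsequence argument promoting pointwise to $C^1$ convergence), all of which you carry out correctly.
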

\begin{proof}
Fix  $w \in C^1(\overline{\Omega})$. We already know (see Lemma \ref{comparison}) that $\mathscr{S}(w)$ turns out downward directed. If $\mathscr{C} \subseteq \mathscr{S}(w)$ is a chain in $\mathscr{S}(w)$ then there exists a sequence $\{u_n\}
\subseteq\mathscr{S}(w) $ satisfying
\[
\lim_{n \to \infty} u_n = \inf \mathscr{C}.
\]
On account of Lemma \ref{Scompact} and up to subsequences, one has $u_n\to\hat u$ in $C^1(\overline{\Omega})$. Thus, $\hat u =\inf\mathscr{C}$. By Zorn's Lemma, $\mathscr{S}(w)$ admits a minimal element $u_w$. It remains to show that $u_w=\min \mathscr{S}(w)$. Pick any $u\in\mathscr{S}(w)$. Through Lemma \ref{comparison} we get $\tilde{u}\in\mathscr{S}(w)$ such that
$\tilde{u} \leq\min\{u_w,u\}$. The minimality of $u_w$ entails $u_w=\tilde{u}$. Therefore, $u_w\leq u$, as desired.
\end{proof}
\begin{rmk}
This proof is patterned after the one in \cite[Theorem 23]{LMZ}. 
\end{rmk}
Lemma \ref{wellposed} allows to consider the function $\Gamma: C^1(\overline{\Omega})\to C^1(\overline{\Omega})$ given by
\[
\Gamma(w):=\min\mathscr{S}(w)\quad\forall\, w\in C^1(\overline{\Omega}).
\]
\begin{lemma}\label{propgamma}
Under assumptions ${\rm H(f)}$, ${\rm H(g)}$, and \eqref{reccond}, $\Gamma$  is continuous and maps bounded sets into relatively compact sets.
\end{lemma}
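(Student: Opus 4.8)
The plan is to verify the two hypotheses of Schaefer's theorem for the map $\Gamma$, namely continuity and the compactness of images of bounded sets, using the structural lemmas already established. The compactness part is immediate: since $\Gamma(w)\in\mathscr{S}(w)$ for every $w$, Lemma~\ref{Scompact} guarantees that $\Gamma$ carries any nonempty bounded $\mathscr{B}\subseteq C^1(\overline{\Omega})$ into the relatively compact set $\mathscr{S}(\mathscr{B})$; in particular $\Gamma(\mathscr{B})$ is relatively compact in $C^1(\overline{\Omega})$.

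The heart of the matter is continuity. I would argue by sequences: fix $w_n\to w$ in $C^1(\overline{\Omega})$ and set $u_n:=\Gamma(w_n)=\min\mathscr{S}(w_n)$, $u:=\Gamma(w)=\min\mathscr{S}(w)$. The sequence $\{w_n\}$ is bounded, so by Lemma~\ref{Scompact} the set $\{u_n\}$ is relatively compact in $C^1(\overline{\Omega})$; hence it suffices to show that $u$ is the only possible cluster point. Let $u_{n_j}\to\hat u$ along some subsequence. First, passing to the limit in the weak formulation (using Proposition~\ref{opestimate} to handle $a$, the convergence $\nabla w_{n_j}\to\nabla w$ uniformly for the convection term, the uniform bound $\|u_{n_j}\|_{C^1(\overline{\Omega})}\le$ const together with $\mathrm{H(f)}$ and the domination $g(\cdot,u_{n_j})\le g(\cdot,\underline u)\le g(\cdot,\epsilon\theta)\in L^{p'}(\Omega)$ from \eqref{subsolsummability} for the singular term, and Lebesgue's dominated convergence theorem) shows $\hat u\in U_w$; passing to the limit in $u_{n_j}\ge\underline u$ gives $\hat u\ge\underline u$; and since $\mathscr{E}_{w_n}$ depends on $w_n$ only through a term continuous in the $C^1$-norm and is $C^1$, the relation $\mathscr{E}_{w_{n_j}}(u_{n_j})<1$ passes to $\mathscr{E}_w(\hat u)\le 1$. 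To upgrade this to the strict inequality $\mathscr{E}_w(\hat u)<1$ I would invoke Lemma~\ref{Slsc}: lower semicontinuity of $\mathscr{S}$ produces, for the element $u=\min\mathscr{S}(w)$, a sequence $v_n\in\mathscr{S}(w_n)$ with $v_n\to u$ in $C^1(\overline{\Omega})$; minimality of $u_n$ in $\mathscr{S}(w_n)$ forces $u_n\le v_n$, and since $v_n\to u$ while $u_{n_j}\to\hat u$, we get $\hat u\le u$. Conversely, from $\hat u\in\mathscr{S}(w)$ (now that $\mathscr{E}_w(\hat u)<1$ would follow, but to close the loop I instead note directly) and $u=\min\mathscr{S}(w)$ we get $u\le\hat u$; hence $\hat u=u$ and, in particular, $\mathscr{E}_w(\hat u)=\mathscr{E}_w(u)<1$ is automatic, so $\hat u\in\mathscr{S}(w)$ a posteriori. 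Since every cluster point of the relatively compact sequence $\{u_n\}$ equals $u$, the whole sequence converges to $u=\Gamma(w)$, proving continuity.

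The main obstacle is exactly the circularity lurking in the previous step: to know $\hat u\in\mathscr{S}(w)$ one needs $\mathscr{E}_w(\hat u)<1$, not merely $\le 1$, and the naive limit only delivers $\le 1$. The clean way out is the sandwich $u_n\le v_n$ with $v_n\in\mathscr{S}(w_n)$, $v_n\to u=\min\mathscr{S}(w)$ coming from Lemma~\ref{Slsc}, which pins down $\hat u\le u$ directly, bypassing the need to certify membership of $\hat u$ in $\mathscr{S}(w)$ before identifying it; the reverse inequality $u\le\hat u$ then needs only that $\hat u$ lies in $U_w$, is $\ge\underline u$, and satisfies $\mathscr{E}_w(\hat u)\le 1$—but in fact once $\hat u=u$ is established the strict inequality holds trivially. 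One technical point worth care is that Lemma~\ref{Slsc} requires the stronger smallness condition \eqref{reccond} rather than \eqref{condition}; this is why \eqref{reccond} is assumed in the statement, and it is consistent since \eqref{reccond} implies \eqref{condition}, so all earlier lemmas invoked (Lemmas~\ref{estlemma}, \ref{criticalprop}, \ref{Scompact}, \ref{comparison}, \ref{wellposed}) remain available.
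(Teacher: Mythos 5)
Your overall strategy coincides with the paper's: compactness of $\Gamma(\mathscr{B})$ via the inclusion $\Gamma(\mathscr{B})\subseteq\mathscr{S}(\mathscr{B})$ and Lemma \ref{Scompact}, and continuity via a subsequence argument in which the limit is identified by squeezing $u_n=\Gamma(w_n)$ between $\underline{u}$ and the sequence $v_n\in\mathscr{S}(w_n)$, $v_n\to\Gamma(w)$, supplied by Lemma \ref{Slsc}. The compactness half and the inequality $\hat u\le\Gamma(w)$ are fine.

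There is, however, a genuine gap in the reverse inequality $\Gamma(w)\le\hat u$, and you have not actually closed the circularity you yourself flag. The minimality of $\Gamma(w)$ is minimality \emph{within} $\mathscr{S}(w)$, whose definition requires the strict bound $\mathscr{E}_w(\cdot)<1$; so to conclude $\Gamma(w)\le\hat u$ you must first certify $\hat u\in\mathscr{S}(w)$, and your claim that ``$\mathscr{E}_w(\hat u)\le 1$ suffices'' is simply false — an element with energy exactly $1$ lies outside $\mathscr{S}(w)$ and is not comparable to $\min\mathscr{S}(w)$ by any lemma in the paper. The fallback ``once $\hat u=u$ is established the strict inequality holds trivially'' assumes the conclusion. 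The sandwich alone only yields $\hat u\le\Gamma(w)$, which does not identify $\hat u$. The missing ingredient is quantitative: from the proofs of Lemmas \ref{comparison} and \ref{wellposed}, the minimum $\Gamma(w')=\min\mathscr{S}(w')$ coincides with an element $\tilde u$ produced by the comparison construction, which satisfies $\mathscr{E}_{w'}(\tilde u)=\tilde{\mathscr{E}}_{w'}(\tilde u)\le 0$; hence $\mathscr{E}_{w_n}(u_n)\le 0$ for every $n$, and the energy-convergence argument from the end of the proof of Lemma \ref{Slsc} gives $\mathscr{E}_w(\hat u)=\lim_n\mathscr{E}_{w_n}(u_n)\le 0<1$. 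This puts $\hat u$ in $\mathscr{S}(w)$, whence $\Gamma(w)\le\hat u$ and the identification $\hat u=\Gamma(w)$ follows. The paper's own proof proceeds in exactly this order — it first establishes $u\in\mathscr{S}(w)$ (citing the proof of Lemma \ref{Slsc} for the energy bound) and only then invokes minimality together with the sandwich — although it, too, leaves the step from ``$\le 1$'' to ``$<1$'' implicit; the $\le 0$ bound above is what makes it rigorous.
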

\begin{proof}
It is analogous to that of \cite[Lemma 24]{LMZ}. So, we will omit details. Let $\mathscr{B}\subseteq C^1(\overline{\Omega})$ be
bounded. Since $\Gamma(\mathscr{B})\subseteq\mathscr{S}(\mathscr{B})$ and $\mathscr{S}(\mathscr{B})$ turns out relatively compact (cf. Lemma \ref{Scompact}),  $\Gamma(\mathscr{B})$ enjoys the same property. Next, suppose $ w_n\to w$ in
$C^1(\overline{\Omega})$. Setting $u_n :=\Gamma (w_n)$, one evidently has $u_n \to u$ in $C^1(\overline{\Omega})$, where a subsequence is considered when necessary. The function $u$ complies with $u\geq\underline{u} $ and $\mathscr{E}_w(u) < 1$ (see the proof of Lemma \ref{Slsc}). Via the Lebesgue dominated convergence theorem, from $u_n\in U_{w_n}$ it follows $u\in U_w$. Plugging all together, we get $u \in \mathscr{S}(w)$. It remains to verify that $u =\Gamma(w)$. Lemma \ref{Slsc} provides a sequence
$\{v_n\}\subseteq C^1(\overline{\Omega})$ fulfilling both $v_n\in\mathscr{S}(w_n)$ for all $n\in\N$ and $v_n\to\Gamma(w)$ in
$ C^1(\overline{\Omega})$. The choice of $\Gamma$ entails $u_n=\Gamma(w_n) \leq v_n$, besides $\Gamma(w)\leq u$. Letting $n \to+\infty$ we thus arrive at
\[
\Gamma(w)\leq u=\lim_{n\to+\infty} u_n\leq\lim_{n \to+\infty} v_n=\Gamma(w),
\]
i.e., $u=\Gamma(w)$, which completes the proof.
\end{proof}
To establish our main result, the stronger version below of  H(f) will be employed.
\vskip3pt
\noindent $\underline{{\rm H'(f)}}$ $ f:\Omega \times\R\times\R^N\to [0,+\infty) $ is a Carathéodory function such that
\[
f(x,s,\xi)\leq c_3 + c_4 |s|^{p-1} + c_5 |\xi|^{p-1} \quad\forall\, (x,s,\xi) \in \Omega \times \R \times \R^N,
\]
with appropriate $c_3,c_4,c_5>0$.
\vskip3pt
\noindent Condition \eqref{condition} is substituted by
\begin{equation}\label{condition2} 
c_4 +(2p-1)c_5+ d< c_1^p c_2\, .
\end{equation}
\begin{rmk}
Assumption ${\rm H'(f)}$ clearly implies $ {\rm H(f)}$, with $ c_M:=c_3+c_5 M^{p-1}$ and $d_M:=c_4$. Likewise, \eqref{condition2} forces \eqref{condition} while \eqref{reccond} reads as
\begin{equation}
\label{reccond2}
c_4+d<\frac{c_1^p c_2}{p}\, .
\end{equation}
\end{rmk}
\begin{thm}
\label{existence}
Let ${\rm H'(f)}$, ${\rm H(g)}$, and \eqref{condition2}--\eqref{reccond2} be satisfied.
Then problem \eqref{problem} possesses a solution $u\in{\rm int}(C^1(\overline{\Omega})_+) $. The set of solutions to \eqref{problem} is compact in $C^1(\overline{\Omega})$.
\end{thm}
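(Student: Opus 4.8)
Proof strategy.

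\textbf{Step 1 (fixed-point formulation).} The plan is to realize \eqref{problem} as a fixed point of $\Gamma$. By Lemma~\ref{propgamma}, whose hypothesis \eqref{reccond} is here guaranteed by \eqref{reccond2}, the map $\Gamma:C^1(\overline{\Omega})\to C^1(\overline{\Omega})$, $\Gamma(w):=\min\mathscr{S}(w)$, is continuous and sends bounded sets into relatively compact sets. Apply Theorem~\ref{schaefer} with $X=C=C^1(\overline{\Omega})$: either the set
\[
\mathscr{A}:=\{w\in C^1(\overline{\Omega}): w=t\,\Gamma(w)\ \text{for some}\ t\in(0,1)\}
\]
is unbounded, or $\mathrm{Fix}(\Gamma)\neq\emptyset$. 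If $u=\Gamma(u)$, then $u\in\mathscr{S}(u)\subseteq U_u$, so testing the identity defining $U_u$ with an arbitrary $v\in W^{1,p}(\Omega)_+$ and noting that the frozen gradient coincides with $\nabla u$ itself shows that $u$ weakly solves \eqref{problem}; moreover $u\in C^1(\overline{\Omega})$ and $u\geq\underline{u}\in\mathrm{int}(C^1(\overline{\Omega})_+)$, whence $u\in\mathrm{int}(C^1(\overline{\Omega})_+)$. Thus it suffices to rule out the first alternative, i.e., to bound $\mathscr{A}$ in $C^1(\overline{\Omega})$.

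\textbf{Step 2 (a priori bound).} Let $w\in\mathscr{A}$ and put $u:=\Gamma(w)$, so $w=tu$ with $t\in(0,1)$, $u\in U_w$, $u\geq\underline{u}$, $\mathscr{E}_w(u)<1$, and $\nabla w=t\nabla u$. Since $u\in W^{1,p}(\Omega)_+$, testing the identity defining $U_w$ with $v=u$ and invoking Proposition~\ref{opestimate} and \eqref{equivnorm} gives
\[
c_1^p c_2\|u\|_{1,p}^p\leq\int_{\Omega}\bigl[f(\cdot,u,t\nabla u)+g(\cdot,u)\bigr]u\,\mathrm{d}x .
\]
Using ${\rm H'(f)}$ (together with $t^{p-1}\leq 1$), ${\rm (g_1)}$–${\rm (g_2)}$, \eqref{subsupnorm}–\eqref{subsolsummability}, and Hölder's and Young's inequalities, the right-hand side is bounded by $\kappa\|u\|_{1,p}^p+C_1\|u\|_{1,p}+C_2$ with $\kappa<c_1^p c_2$ (this is exactly what \eqref{condition2} provides) and $C_1,C_2$ depending only on the structural data; hence $\|u\|_{1,p}\leq R_1$, uniformly in $w$. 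Next, rewrite the equation satisfied by $u$ as $-\mathrm{div}\,a(\nabla u)=\Phi$, where $0\leq\Phi\leq c_3+(c_4+d)|u|^{p-1}+c_5|\nabla u|^{p-1}+c+g(\cdot,\underline{u})$; since $|\nabla u|^{p-1}\leq\varepsilon|\nabla u|^p+C_\varepsilon$ with $\varepsilon$ small and $g(\cdot,\underline{u})\in L^{p'}(\Omega)$, Moser's iteration \cite{Le} (the Robin boundary term having the favourable sign on nonnegative test functions) yields $\|u\|_\infty\leq R_2$, and then Liebermann's regularity theory \cite{Li} gives $\|u\|_{C^{1,\alpha}(\overline{\Omega})}\leq R_3$, all constants depending only on $R_1$ and the data. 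Therefore $\|w\|_{C^1(\overline{\Omega})}=t\|u\|_{C^1(\overline{\Omega})}\leq R_3$, so $\mathscr{A}$ is bounded and, by Step~1, \eqref{problem} admits a solution $u\in\mathrm{int}(C^1(\overline{\Omega})_+)$.

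\textbf{Step 3 (compactness of the solution set).} Let $\mathscr{U}$ denote the set of solutions of \eqref{problem}. Given $u\in\mathscr{U}$, testing the weak formulation of \eqref{problem} and inequality \eqref{subformula} with $v=(\underline{u}-u)^+$ and arguing as in Lemma~\ref{criticalprop} (using $f\geq 0$, ${\rm (g_1)}$ where $0<u<\underline{u}\leq 1$, the strict monotonicity of $a$ and \cite[Lemma~A.0.5]{P}) yields $u\geq\underline{u}$, hence $u\in\mathrm{int}(C^1(\overline{\Omega})_+)$. Repeating the estimates of Step~2 with $t=1$ bounds $\mathscr{U}$ in $W^{1,p}(\Omega)$, and then in $C^{1,\alpha}(\overline{\Omega})$, so $\mathscr{U}$ is relatively compact in $C^1(\overline{\Omega})$. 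Finally, if $u_n\in\mathscr{U}$ and $u_n\to u$ in $C^1(\overline{\Omega})$, then $u\geq\underline{u}$, and one may pass to the limit in the weak formulation: $a$ is continuous and $\nabla u_n\to\nabla u$ uniformly, while $f(\cdot,u_n,\nabla u_n)\to f(\cdot,u,\nabla u)$ and $g(\cdot,u_n)\to g(\cdot,u)$ almost everywhere, with the uniform bounds $f(\cdot,u_n,\nabla u_n)\leq c_3+(c_4+c_5)R_3^{p-1}$ and $0\leq g(\cdot,u_n)\leq g(\cdot,\underline{u})+c+dR_3^{p-1}\in L^1(\Omega)$. Hence $u\in\mathscr{U}$, so $\mathscr{U}$ is closed and relatively compact, i.e., compact.

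The crux is Step~2: the convection term carries the full gradient growth $|\nabla u|^{p-1}$, and one must absorb it into the principal part in order to obtain the $W^{1,p}$ estimate — which is precisely the role of the smallness condition \eqref{condition2} — and, through Young's inequality with a small parameter, into the De Giorgi–Moser scheme for the $L^\infty$ bound, all with constants independent of $w\in\mathscr{A}$.
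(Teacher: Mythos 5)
Your argument is correct and follows the same overall architecture as the paper: Schaefer's theorem applied to $\Gamma$, an a priori bound on the Leray--Schauder set, nonlinear regularity, and closedness plus the compact embedding $C^{1,\alpha}(\overline{\Omega})\hookrightarrow C^1(\overline{\Omega})$ for the compactness of the solution set. The one genuine difference is where the a priori bound comes from. The paper exploits the energy localization built into $\mathscr{S}$: from $u=\tau\,\Gamma(u)$ it reads off $\mathscr{E}_u\left(\frac{u}{\tau}\right)<1$ and estimates the primitives $\hat F,\hat G$ in the functional, which is precisely why the constant $(2p-1)c_5$ appears in \eqref{condition2} (two Young splittings of $c_5|\nabla u|^{p-1}$, one inside the $\int_{\underline u}^{u/\tau}$ integral and one in the $K'$ term). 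You instead test the equation $u\in U_{tu}$ directly with $v=u$, which gives the coercive left-hand side $c_1^pc_2\|u\|_{1,p}^p$ at the cost of only $c_4+c_5+d$ on the right; so your route needs a (slightly) weaker smallness condition than \eqref{condition2} actually supplies --- your phrase ``exactly what \eqref{condition2} provides'' should read ``implied by \eqref{condition2}''. Both routes are legitimate; the sublevel bound $\mathscr{E}_w(\cdot)<1$ remains indispensable elsewhere (Lemmas \ref{Scompact}--\ref{propgamma}), so you cannot dispense with it globally, but for the Leray--Schauder estimate your direct testing is a clean alternative. Your Step 3 is also slightly more complete than the paper's: you make explicit the uniform $W^{1,p}$--$L^\infty$--$C^{1,\alpha}$ bound on the whole solution set (via $u\geq\underline u$ and the $t=1$ estimate) that the paper leaves implicit before invoking closedness and the compact embedding.
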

\begin{proof}
Define
\[
\Lambda(\Gamma):=\{ u \in C^1(\overline{\Omega}): u=\tau\,\Gamma(u)\;\mbox{for some}\;\tau\in (0,1)\}.
\]
\underline{Claim}: $\Lambda(\Gamma)$ is bounded in $W^{1,p}(\Omega)$.\\
To see this, pick any $u\in\Lambda(\Gamma)$. Since $\frac{u}{\tau}=\Gamma(u)\in\mathscr{S}(u)$, one has $\mathscr{E}_u \left( \frac{u}{\tau} \right)<1$. Assumption ${\rm H'(f)}$, combined with Young's and H\"older's inequalities, produces
\begin{equation*}
\begin{split}
\int_{\Omega \left(\frac{u}{\tau}>\underline{u}\right)}\left(\int_{\underline{u}}^{\frac{u}{\tau}} f(\cdot,t,\nabla u) {\rm d}t \right) {\rm d}x 
&\leq\int_{\Omega}\left( \int_{0}^{\frac{u}{\tau}} (c_3 + c_4 t^{p-1}+ c_5|\nabla u|^{p-1}) {\rm d}t \right) {\rm d}x \\
&\leq c_3 \left\|\frac{u}{\tau}\right\|_1 + \frac{c_4}{p} \left\|\frac{u}{\tau}\right\|_p^p+ c_5 \int_{\Omega} |\nabla u|^{p-1} \left|\frac{u}{\tau}\right| {\rm d}x \\
&\leq c_3 |\Omega|^{\frac{1}{p'}}\left\|\frac{u}{\tau}\right\|_p+\frac{c_4}{p} \left\|\frac{u}{\tau}\right\|_p^p+c_5 \left(\frac{\left\|\frac{u}{\tau}\right\|_p^p}{p}+\frac{\|\nabla u\|_p^p}{p'} \right) \\
&\leq c_3|\Omega|^{\frac{1}{p'}}\left\|\frac{u}{\tau}\right\|_{1,p}+\frac{c_4+c_5}{p}\left\|\frac{u}{\tau}\right\|_{1,p}^p + \frac{c_5}{p'}\|u\|_{1,p}^p. 
\end{split}
\end{equation*}
Analogously, on account of \eqref{subsupnorm},
\begin{equation*}
\begin{split}
\int_\Omega f(\cdot,\underline{u},\nabla u)\underline{u} {\rm d}x
&\leq\int_\Omega\left( c_3 \underline{u}+c_4 \underline{u}^p+c_5 |\nabla u|^{p-1}\right)\underline{u} {\rm d}x \\
&\leq \left( c_3 + c_4 + \frac{c_5}{p} \right) |\Omega| + \frac{c_5}{p'} \|\nabla u\|_p^p \\
&\leq \left( c_3 + c_4 + \frac{c_5}{p} \right) |\Omega| + \frac{c_5}{p'} \|u\|_{1,p}^p. \\
\end{split}
\end{equation*}
Reasoning as in Lemma \ref{estlemma} and recalling that $\tau\in (0,1)$, we thus achieve
\begin{equation*}
\begin{split}
1&>\mathscr{E}_u\left( \frac{u}{\tau} \right)\\
&\geq\frac{c_1^p c_2 - c_4 - (2p-1)c_5-d}{p}\left\|\frac{u}{\tau}\right\|_{1,p}^p
-(c_3 + c) |\Omega|^{\frac{1}{p'}}\left\| \frac{u}{\tau} \right\|_{1,p} - K',
\end{split}
\end{equation*}
where
$$K':=\left( c_3+c_4+\frac{c_5}{p} \right)|\Omega|+2 \|g(\cdot,\epsilon \theta)\|_{p'} |\Omega|^{\frac{1}{p}}.$$
Thanks to \eqref{condition2}, the above inequalities force 
\[
\| u\|_{1,p}\leq\left\| \frac{u}{\tau} \right\|_{1,p} \leq K^*,
\]
with $K^*>0$ independent of $u$ and $\tau$. Thus, the claim is proved.\\

By regularity \cite{Li}, the set $\Lambda(\Gamma)$ turns out bounded in $C^1(\overline{\Omega})$. Hence, due to Lemma \ref{propgamma}, Theorem \ref{schaefer} applies, which entails ${\rm Fix}(\Gamma) \neq \emptyset$. Let $u\in {\rm Fix}(\Gamma) $. From $u =\Gamma(u)\in\mathscr{S}(u)$ we deduce both $u\geq\underline{u}$ and $u\in U_u$. Accordingly, 
$$\hat{f}(\cdot,u)=f(\cdot,u,\nabla u),\quad\hat{g}(\cdot,u) = g(\cdot,u),$$
namely the function $u$ solves problem \eqref{problem}. Further, $u\in{\rm int}(C^1(\overline{\Omega})_+)$ because of the strong maximum principle.

Finally, arguing as in Lemma \ref{regularity} ensures that each solution to \eqref{problem} lies in $C^{1,\alpha}(\overline{\Omega})$. Since $C^{1,\alpha}(\overline{\Omega})\hookrightarrow C^1(\overline{\Omega})$ compactly and the solution set of \eqref{problem} is closed in $C^1(\overline{\Omega})$, the conclusion follows.
\end{proof}
\begin{rmk}
The same techniques can be applied for finding solutions to the Neumann problem
\begin{equation*}
\left\{
\begin{array}{ll}
- {\rm \, div} a(\nabla u) + |u|^{p-2}u = f(x,u,\nabla u) + g(x,u)\;\;&\mbox{in}\;\; \Omega, \\
u > 0\;\;&\mbox{in}\;\;\Omega, \\
\displaystyle{\frac{\partial u}{\partial \nu_a}} = 0\;\;&\mbox{on}\;\;\partial \Omega.
\end{array}
\right.
\end{equation*}
In fact, it is enough to replace the norm $\|\cdot\|_{\beta,1,p}$ with the standard one $\|\cdot\|_{1,p}$.
\end{rmk}
\section{Uniqueness (for $p=2$)}\label{S4}
Throughout this section, $p=2$, the operator $a$ fulfills H(a), while the nonlinearities $f$ and $g$ comply with H(f) and H(g), respectively. The following further conditions will be posited:
\begin{itemize}
\item[$({\rm a}_4)$] There exists $c_6\in (0,1]$ such that
\[
\langle a(\xi)-a(\eta),\xi-\eta\rangle\geq c_6 |\xi - \eta|^2\quad\forall\,\xi,\eta \in \R^N.
\]
\item[$\underline{{\rm H''(f)}}$] With appropriate $c_7,c_8>0$ one has
\begin{equation}\label{effeone}
[f(x,s,\xi)-f(x,t,\xi)](s-t)\leq c_7|s-t|^2
\end{equation}
\begin{equation}\label{effetwo}
|f(x,t,\xi) - f(x,t,\eta)| \leq  c_8 |\xi-\eta|
\end{equation}
in $\Omega\times\R\times\R^N$.
\item[$\underline{{\rm H'(g)}}$] There is $c_9>0$ such that
\begin{equation}\label{gone}
[g(x,s) - g(x,t)](s-t)\leq c_9 |s-t|^2\;\;\forall\, x \in\Omega,\; s,t \in [1,+\infty).
\end{equation}
Moreover,
\begin{equation}\label{gtwo}
g(x,s)\leq g(x,1)\;\;\mbox{in}\;\;\Omega\times (1,+\infty).
\end{equation}
\end{itemize}
\begin{ex}
The parametric $(2,q)$-Laplacian $\Delta+\mu\Delta_q$, where $1<q< 2$, $\mu\geq 0$, satisfies ${\rm H(a)}$ and $({\rm a}_4)$; cf. \cite[Lemma A.0.5]{P}.
\end{ex}
\begin{thm}
Under the above assumptions, problem \eqref{problem} admits a unique solution provided
\begin{equation}\label{uniqcond}
c_7 + c_1 c_8 + c_9 < c_1^2 c_6.
\end{equation}
\end{thm}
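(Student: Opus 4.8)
The plan is to prove uniqueness by a direct comparison argument: assuming $u_1, u_2$ are two solutions of \eqref{problem}, test the weak formulations against $u_1 - u_2$, subtract, and show that \eqref{uniqcond} forces $\|u_1 - u_2\|_{\beta,1,2} = 0$. First I would recall that, by Theorem \ref{existence} (whose hypotheses are subsumed here since $p=2$ and ${\rm H}''({\rm f})$, ${\rm H}'({\rm g})$ together with \eqref{uniqcond} can be checked to imply the structural bounds used there), the solution set is nonempty and each solution belongs to ${\rm int}(C^1(\overline{\Omega})_+)$; in particular both $u_1$ and $u_2$ are bounded below by the universal subsolution $\underline{u}$ constructed in Lemma \ref{subsollemma}. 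This last point is what lets us handle the singular term: on $\Omega(u_i < 1)$ we only know $g$ is nonincreasing, so the sign of $[g(\cdot,u_1) - g(\cdot,u_2)](u_1-u_2)$ is already $\leq 0$ there, while on $\Omega(u_i \geq 1)$ we invoke \eqref{gone}.

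The core computation: taking $v = u_1 - u_2 \in W^{1,2}(\Omega)$ (legitimate since it need not be sign-definite, and the weak formulations of \eqref{problem} hold for all $v \in W^{1,p}(\Omega)$ once one notes the solutions are regular enough) and subtracting the two identities gives
\begin{equation*}
\int_\Omega \langle a(\nabla u_1) - a(\nabla u_2), \nabla(u_1-u_2)\rangle\,{\rm d}x + \beta\int_{\partial\Omega}(u_1-u_2)^2\,{\rm d}\sigma = \int_\Omega \Delta f\,(u_1-u_2)\,{\rm d}x + \int_\Omega \Delta g\,(u_1-u_2)\,{\rm d}x,
\end{equation*}
where $\Delta f := f(\cdot,u_1,\nabla u_1) - f(\cdot,u_2,\nabla u_2)$ and $\Delta g := g(\cdot,u_1) - g(\cdot,u_2)$. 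I would bound the left side below by $c_6\|\nabla(u_1-u_2)\|_2^2 + \beta\|u_1-u_2\|_{2,\partial\Omega}^2 \geq c_6\|u_1-u_2\|_{\beta,1,2}^2$ using $({\rm a}_4)$ (and $c_6 \leq 1$). For the right side, split $\Delta f = [f(\cdot,u_1,\nabla u_1) - f(\cdot,u_2,\nabla u_1)] + [f(\cdot,u_2,\nabla u_1) - f(\cdot,u_2,\nabla u_2)]$; the first bracket times $(u_1-u_2)$ is $\leq c_7(u_1-u_2)^2$ by \eqref{effeone}, and the second is controlled via \eqref{effetwo} by $c_8|\nabla(u_1-u_2)|\,|u_1-u_2|$, then integrated and Cauchy--Schwarz'd to $c_8\|\nabla(u_1-u_2)\|_2\|u_1-u_2\|_2$. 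For $\Delta g$, on $\Omega(u_1<1)\cup\Omega(u_2<1)$ monotonicity of $g(x,\cdot)$ on $(0,1]$ — together with \eqref{gtwo} to handle the mixed case where one argument exceeds $1$ — makes the integrand $\leq 0$, and on the remaining set \eqref{gone} gives $\leq c_9(u_1-u_2)^2$. Collecting terms, writing $\|u_1-u_2\|_2^2 \leq \|u_1-u_2\|_{1,2}^2 \leq c_1^{-2}\|u_1-u_2\|_{\beta,1,2}^2$ and similarly $\|\nabla(u_1-u_2)\|_2\|u_1-u_2\|_2 \leq \|u_1-u_2\|_{1,2}^2 \leq c_1^{-2}\|u_1-u_2\|_{\beta,1,2}^2$, one obtains
\begin{equation*}
c_6\,\|u_1-u_2\|_{\beta,1,2}^2 \leq \frac{c_7 + c_1 c_8 + c_9}{c_1^2}\,\|u_1-u_2\|_{\beta,1,2}^2,
\end{equation*}
so \eqref{uniqcond} forces $u_1 = u_2$.

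The main obstacle — or at least the step needing the most care — is the treatment of the singular term $g$ near the boundary of $\{u_i \geq 1\}$: one must verify that the mixed region, say $x$ with $u_1(x) < 1 \leq u_2(x)$, genuinely contributes a nonpositive amount. Here $u_1(x) - u_2(x) < 0$, and $g(x,u_1(x)) - g(x,u_2(x)) \geq g(x,u_1(x)) - g(x,1) \geq 0$ by \eqref{gtwo} followed by monotonicity on $(0,1]$, so the product is indeed $\leq 0$; making this case analysis airtight (and likewise confirming that the scalar inequality $c_8\|\nabla v\|_2\|v\|_2 \leq \|\nabla v\|_2^2/2 + \|v\|_2^2/2$ is sharp enough, or whether one prefers the cruder $\|\nabla v\|_2\|v\|_2 \le \|v\|_{1,2}^2$ as above) is the delicate bookkeeping. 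A secondary point worth a remark is justifying that $v = u_1 - u_2$ is an admissible test function despite the original formulation restricting to $v \in W^{1,p}(\Omega)_+$; this follows since a genuine solution satisfies the equation with equality, hence for all $v \in W^{1,p}(\Omega)$ by writing $v = v^+ - v^-$.
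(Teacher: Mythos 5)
Your proposal follows the paper's argument essentially verbatim: test with $u_1-u_2$, bound the principal part from below via $({\rm a}_4)$, split $\Delta f$ into the $s$-increment and the $\xi$-increment controlled by \eqref{effeone} and \eqref{effetwo} respectively, and decompose the $\Delta g$ integral over the four regions determined by the threshold $1$, using $({\rm g}_1)$ on the region where both solutions are below $1$, \eqref{gone} where both exceed $1$, and $({\rm g}_1)$ together with \eqref{gtwo} on the two mixed regions — your sign check on $\Omega(u_1<1\le u_2)$ is exactly the paper's. The one point you must tighten is the constant in the gradient cross-term: the bound $\|\nabla v\|_2\|v\|_2\le\|v\|_{1,2}^2\le c_1^{-2}\|v\|_{\beta,1,2}^2$ that you state yields the coefficient $c_8/c_1^2$, and since $c_1\in(0,1)$ this is strictly larger than $c_8/c_1$, so the chain of inequalities as written only proves uniqueness under the stronger hypothesis $c_7+c_8+c_9<c_1^2c_6$ rather than \eqref{uniqcond}. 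To land on the numerator $c_1c_8$ appearing in your final display you need the sharper estimate $\|\nabla v\|_2\le\|v\|_{\beta,1,2}$ (immediate from the definition of the norm, with no loss of $c_1$) combined with $\|v\|_2\le c_1^{-1}\|v\|_{\beta,1,2}$, which gives $c_8\|\nabla v\|_2\|v\|_2\le (c_8/c_1)\|v\|_{\beta,1,2}^2$; with that correction the proof closes exactly as in the paper.
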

\begin{proof}
Suppose $u,v$ solve \eqref{problem}, test with $u-v$, and subtract to arrive at
\begin{equation}\label{weakcomp}
\begin{split}
&\int_{\Omega}\langle a(\nabla u)-a(\nabla v),\nabla(u-v)\rangle{\rm d}x+\beta\int_{\partial \Omega} |u-v|^2 {\rm d}\sigma\\
&= \int_{\Omega} [f(\cdot,u,\nabla u) - f(\cdot,v,\nabla v)](u-v){\rm d}x\\
&+ \int_{\Omega} [g(\cdot,u) - g(\cdot,v)](u-v) {\rm d}x.
\end{split}
\end{equation}
The left-hand side of \eqref{weakcomp} can easily be estimated from below via $({\rm a}_4)$ as follows:
\begin{equation}\label{aestimate}
\int_{\Omega}\langle a(\nabla u)-a(\nabla v),\nabla(u-v)\rangle {\rm d}x+\beta\int_{\partial \Omega}|u-v|^2 {\rm d}\sigma
\geq c_6 \|u-v\|_{\beta,1,2}^2.
\end{equation}
Using \eqref{effeone}--\eqref{effetwo} and H\"older's inequality we get
\begin{equation}\label{festimate}
\begin{split}
\int_{\Omega} & [f(\cdot,u,\nabla u) - f(\cdot,v,\nabla v)](u-v) {\rm d}x \\
&=\int_{\Omega} [f(\cdot,u,\nabla u) - f(\cdot,v,\nabla u)](u-v) {\rm d}x \\
&\phantom{pppppp}+\int_{\Omega}[f(\cdot,v,\nabla u) - f(\cdot,v,\nabla v)](u-v) {\rm d}x \\
&\leq c_7\int_{\Omega} |u-v|^2 {\rm d}x + c_8\int_{\Omega} |\nabla u - \nabla v| |u-v| {\rm d}x \\
&\leq c_7 \|u-v\|_2^2 + c_8 \|\nabla(u-v)\|_2\|u-v\|_2 \\
&\leq \frac{c_7}{c_1^2} \|u-v\|_{\beta,1,2}^2+ \frac{c_8}{c_1} \|u-v\|_{\beta,1,2}^2.
\end{split}
\end{equation}
Observe now that
\begin{equation}\label{gestimate}
\begin{split}
&\int_{\Omega} [g(\cdot,u) - g(\cdot,v)](u-v) {\rm d}x \\
&= \int_{\Omega(\max\{u,v\} \leq 1)} [g(\cdot,u) - g(\cdot,v)](u-v) {\rm d}x \\
&+ \int_{\Omega(\min\{u,v\} > 1)} [g(\cdot,u) - g(\cdot,v)](u-v) {\rm d}x \\
&+ \int_{\Omega(u \leq 1 < v)} [g(\cdot,u) - g(\cdot,v)](u-v) {\rm d}x \\
&+ \int_{\Omega(v \leq 1 < u)} [g(\cdot,u) - g(\cdot,v)](u-v) {\rm d}x.
\end{split}
\end{equation}
By hypothesis $({\rm g}_1)$ in H(g) one has
\begin{equation}\label{integral1}
\int_{\Omega(\max\{u,v\} \leq 1)} [g(\cdot,u) - g(\cdot,v)](u-v) {\rm d}x \leq 0.
\end{equation}
Inequality \eqref{gone} entails
\begin{equation}\label{integral2}
\begin{split}
&\int_{\Omega(\min\{u,v\} > 1)} [g(\cdot,u) - g(\cdot,v)](u-v) {\rm d}x \\
&\leq c_9 \|u-v\|_2^2 \leq \frac{c_9}{c_1^2} \|u-v\|_{\beta,1,2}^2.
\end{split}
\end{equation}
Thanks to $({\rm g}_1)$ again and \eqref{gtwo} we obtain
\begin{equation}\label{integral3}
\begin{split}
&\int_{\Omega(u \leq 1 < v)} [g(\cdot,u) - g(\cdot,v)](u-v) {\rm d}x \\
&\leq \int_{\Omega(u \leq 1 < v)} [g(\cdot,1) - g(\cdot,v)](u-v) {\rm d}x \leq 0.
\end{split}
\end{equation}
Likewise,
\begin{equation}\label{integral4}
\int_{\Omega(v \leq 1<u)} [g(\cdot,u) - g(\cdot,v)](u-v) {\rm d}x\leq 0.
\end{equation}
Plugging \eqref{integral1}--\eqref{integral4} into \eqref{gestimate} and \eqref{aestimate}--\eqref{gestimate} into \eqref{weakcomp}
yields
\begin{equation*}
c_6 \|u-v\|_{\beta,1,2}^2 \leq\left(\frac{c_7}{c_1^2} + \frac{c_8}{c_1} + \frac{c_9}{c_1^2}\right) \|u-v\|_{\beta,1,2}^2.
\end{equation*}
On account of \eqref{uniqcond}, this directly leads to $u=v$, as desired.
\end{proof}
\begin{rmk}
The conditions that guarantee existence or uniqueness, namely \eqref{condition2}, \eqref{reccond2}, and \eqref{uniqcond}, represent a balance between data (growth or variation of reaction terms) and structure (driving operator and domain) of the problem .
\end{rmk}
\section*{Acknowledgement}
This work is performed within PTR 2018--2020 - Linea di intervento 2: `Metodi Variazionali ed Equazioni Differenziali' of the University of Catania and partly funded by Research project of MIUR (Italian Ministry of Education, University and Research) Prin 2017 `Nonlinear Differential Problems via Variational, Topological and Set-valued Methods' (Grant Number 2017AYM8XW)

\end{document}